\let\mathcal\mathscr
\def\Z{{\bf Z}}
\def\llra{\hbox to 10mm{\rightarrowfill}}
\def\lllra{\hbox to 15mm{\rightarrowfill}}
\def\PA{{\widehat A}}
\def\PB{{\widehat B}}
\def\PK{{\widehat K}}
\def\PY{{\widehat Y}}
\def\Pp{{\widehat\pi}}
\def\phi{{\varphi}}
\def\wA{{\widetilde A}}
\def\wf{{\widetilde f}}
\def\wX{{\widetilde X}}
\def\wK{{\widetilde K}}
\def\cI{\mathcal{I}}
\def\cF{\mathcal{F}}
\def\cL{\mathcal{L}}
\def\cO{\mathcal{O}}
\def\cG{\mathcal{G}}
\def\cH{\mathcal{H}}
\def\cS{\mathcal{S}}
\def\cJ{\mathcal{J}}
\def\cQ{\mathcal{Q}}
\DeclareMathOperator{\codim}{codim}
\DeclareMathOperator{\Pic}{Pic}
\DeclareMathOperator{\Bs}{Bs}
\DeclareMathOperator{\Supp}{Supp}
\DeclareMathOperator{\Exc}{Exc}
\DeclareMathOperator{\pr}{pr}
\newtheorem{lemm}{Lemma}[section]
\newtheorem{theo}[lemm]{Theorem}
\newtheorem*{conj*}{Conjecture}
\newtheorem{theAlph}{Theorem}[section]
\theoremstyle{definition}
\newtheorem{defi}[lemm]{Definition}
\newtheorem{rema}[lemm]{Remark}
\newtheorem{exam}[lemm]{Example}
\newtheorem{sett}[lemm]{Setting}
\theoremstyle{remark}
\newtheorem*{remark*}{Remark}
\newtheorem*{note*}{Note}
\def\moins{\mathop{\hbox{\vrule height 3pt depth -2pt
width 5pt}\,}}
\newcommand{\res}[2]{\left.#1\right|_{#2}} 
\begin{document}
\title[On the Iitaka fibration]{On the Iitaka fibration of varieties of maximal Albanese dimesion}
\author{Zhi Jiang}
\address{Math\'ematiques B\^{a}timent 425\\
Universit\'{e} Paris-Sud\\
F-91405 Orsay, France}
\email{zhi.jiang@math.u-psud.fr}

\author{Mart\'{\i} Lahoz}
\address{Mathematisches Institut\\
Universit\"at Bonn\\
Endenicher Allee 60\\
53115 Bonn, Germany}
\email{lahoz@math.uni-bonn.de}

\author{Sofia Tirabassi}
\address{Dipartimento di Matematica\\ Universit\`{a} degli Studi Roma TRE\\
Largo S. L. Murialdo 1\\
 00146 Roma, Italy}
\email{tirabass@mat.uniroma3.it}

\keywords{Maximal Albanese dimension varieties, pluricanonical maps,
Iitaka fibration}

\subjclass[2010]{14E05 (Primary); 14C20, 14J99 (Secondary)}
\date{\today}

\begin{abstract}
We prove that the tetracanonical map of a variety $X$ of maximal Albanese dimension induces the Iitaka fibration.
Moreover, if $X$ is of general type, then the tricanonical map is birational.
\end{abstract}

\maketitle
\section{Introduction}
In the study of smooth complex algebraic varieties, the natural maps defined by differential forms are of special importance.
In particular pluricanonical maps have been extensively studied.

When $X$ is of maximal Albanese dimension, the study was started by Chen and Hacon \cite{CH2, CH}.
Their results were improved by Jiang \cite{j2}, who showed that the $5$-th canonical map induces the Iitaka fibration.
Recently, Tirabassi \cite{ti}, showed that if $X$ is of general type then the $4$-th pluricanonical is birational.

Combining Theorems \ref{thm:trimap} and \ref{thm:tetraIitaka}, we obtain the following statement.
\begin{theAlph}\label{thmA} Let $X$ be a smooth projective variety of maximal Albanese dimension.
Then,
\begin{enumerate}
 \item the linear system $|4K_X|$ induces the Iitaka fibration of $X$,
 \item if $X$ is of general type, then the linear system $|3K_X|$ induces a birational map.
\end{enumerate}
\end{theAlph}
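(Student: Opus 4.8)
The statement is the conjunction of Theorem~\ref{thm:trimap} (which gives~(2)) and Theorem~\ref{thm:tetraIitaka} (which gives~(1)), so the content lies in those two results, which refine the bound ``$5$'' of Jiang and the bound ``$4$'' of Tirabassi by one unit. Write $a\colon X\to A:=\Alb(X)$ for the Albanese morphism, generically finite onto its image by hypothesis. The common toolkit is generic vanishing theory --- the cohomological support loci $V^i(\omega_X)=\{P\in\Pic^0(A):H^i(X,\omega_X\ot P)\neq 0\}$ are finite unions of torsion translates of abelian subvarieties, and $a_*\omega_X$ (more generally $R^ja_*\omega_X$) is a $\gv$-sheaf on $A$ --- together with Pareschi--Popa positivity on abelian varieties: $M$-regular sheaves are continuously globally generated, a tensor product of two continuously globally generated sheaves is globally generated off a proper closed subset, and a further continuously-globally-generated twist separates two general points. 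The argument runs by induction on $\dim X$, the inductive step being triggered whenever a suitable pushforward fails $M$-regularity: the structure of the loci $V^i$ then exhibits $X$ as fibered over a lower-dimensional variety of maximal Albanese dimension lying over a quotient abelian variety of $A$.

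For Theorem~\ref{thm:trimap} one assumes $X$ of general type, so $\chi(\omega_X)\ge 0$. In the ``primitive'' case, where the relevant pushforward of $\omega_X^{\ot 2}$ is $M$-regular, birationality of $\phi_{|3K_X|}$ comes out directly: writing $3K_X=K_X+2K_X$, the sheaf $\omega_X$ is continuously globally generated away from the exceptional locus of $a$, $a_*\omega_X^{\ot 2}$ is then globally generated off a proper subset, and the product separates two general points. The work is in the non-primitive case ($\chi(\omega_X)=0$ being the extreme instance), where the structure theorem produces a fibration $f\colon X\to Y$ onto a variety of maximal Albanese dimension with $0<\dim Y<\dim X$; one feeds the inductive hypothesis to a smooth model of $Y$ and to the general fiber and patches the resulting pluricanonical sections, the technical heart being to control the relative pluricanonical sheaves $f_*\omega_{X/Y}^{\ot k}$ as weakly positive, $\gv$-type sheaves on $Y$, twisted by line bundles in $\Pic^0(Y)$. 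I expect this patching, together with the bookkeeping forced by $a$ being only generically finite rather than an embedding, to be the main obstacle for~(2); it is precisely the case excluded in the earlier literature.

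For Theorem~\ref{thm:tetraIitaka} one replaces $X$ birationally so that the Iitaka fibration $g\colon X\to Z$ is a morphism with $Z$ smooth; by Ueno's fibration theorem for subvarieties of abelian varieties and Kawamata's characterization of abelian varieties, the general fiber $F$ is birational to an abelian variety (so $P_k(F)=1$ for all $k\ge 1$) and $Z$ is of general type and of maximal Albanese dimension. From $\omega_X=g^*\omega_Z\ot\omega_{X/Z}$ and the projection formula, $g_*\omega_X^{\ot 4}=\omega_Z^{\ot 4}\ot g_*\omega_{X/Z}^{\ot 4}$, and since $g_*\omega_{X/Z}^{\ot 4}$ has rank $P_4(F)=1$, each section of $4K_X$ restricts on a general fiber to a scalar multiple of the unique section of $4K_F$; hence $\phi_{|4K_X|}$ contracts the general fiber and factors birationally through $g$. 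What remains is to separate two general fibers of $g$, i.e.\ to show that the rank-one sheaf $g_*\omega_X^{\ot 4}$ separates two general points of $Z$. Writing $g_*\omega_X^{\ot 4}=\omega_Z^{\ot 3}\ot\bigl(\omega_Z\ot g_*\omega_{X/Z}^{\ot 4}\bigr)$ and using the weak positivity of $g_*\omega_{X/Z}^{\ot 4}$ (Viehweg) together with the $\gv$-property of $g_*\omega_{X/Z}$ over $\Alb(Z)$, the bracketed factor supplies one extra unit of global generation, so separation on $Z$ reduces to the birationality of $\phi_{|3K_Z|}$ already established in Theorem~\ref{thm:trimap}; combining, $\phi_{|4K_X|}$ has Stein factorization $g$ and therefore induces the Iitaka fibration. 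The tightest point --- and the reason the fourth rather than the third power is needed once $\kappa(X)<\dim X$ --- is exactly that the relative term $g_*\omega_{X/Z}$ must be absorbed while $Z$ itself may lie in the $\chi=0$ regime; handling the two simultaneously is the main obstacle for~(1).
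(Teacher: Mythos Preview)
Your outline for part~(2) is broadly aligned with the paper's approach: induction on $\dim X$, with the base case handled via (almost) $M$-regularity and Chen--Hacon's $\chi>0$ result, and the inductive step run through a fibration $f\colon X\to Y$ extracted from a maximal codimension-$j$ component of $V^j(\omega_X)$. Two caveats. First, in your ``primitive'' case you assert that $\omega_X$ is continuously globally generated; this needs $a_*\omega_X$ to be $M$-regular, which is \emph{not} implied by $M$-regularity of $a_*\omega_X^{\otimes 2}$. The paper instead checks that the set $\cS_X$ is empty, deduces $\chi(\omega_X)>0$, and cites \cite[Thm.~5.4]{CH}. Second, the ``patching'' you describe is exactly where the work lies: the paper builds two positive line bundles $\cL,\cH$ on $Y$ (Lemmas~\ref{lem:existencebundle} and~\ref{lem:2isbirational}), proves surjectivity of restriction to a general fiber (Lemma~\ref{lem:completeseries}), and then proves a delicate non-vanishing $H^0(3K_X+P-f^*(\cH+P'))\neq 0$ via multiplier-ideal arguments.

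Your approach to part~(1), however, has a genuine gap. You assert that the Iitaka base $Z$ is of general type and then reduce to applying Theorem~\ref{thm:trimap} to $Z$. This is \emph{false} in general: Example~\ref{ex:not3} exhibits a variety $X$ of maximal Albanese dimension whose Iitaka fibration is $X\to E_1\times E_2\times E_3$, a product of elliptic curves, so $\kappa(Z)=0$ and $|3K_Z|$ is the constant map. The appeal to Ueno's theorem would be valid if $a_X$ were birational onto its image, but for $a_X$ merely generically finite the Iitaka base need only be of maximal Albanese dimension, not of general type. Consequently the factorization $g_*\omega_X^{\otimes 4}=\omega_Z^{\otimes 3}\otimes(\omega_Z\otimes g_*\omega_{X/Z}^{\otimes 4})$ gives you nothing, and your reduction collapses.

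The paper's proof of Theorem~\ref{thm:tetraIitaka} takes a different route. It does \emph{not} work directly with the Iitaka fibration $g\colon X\to Z$. Instead it runs a parallel induction on $\dim X$: when $\kappa(X)<\dim X$ the set $\cS_X$ is nonempty, yielding the auxiliary fibration $f\colon X\to Y$ of Setting~\ref{set:basic}, which factors as $X\xrightarrow{g}Z\xrightarrow{h}Y$. One applies the inductive hypothesis to the general fiber $X_y$ (showing $|4K_{X_y}|$ induces $X_y\to Z_y$), uses Lemma~\ref{lem:completeseries} for surjectivity of restriction, and then proves the non-vanishing $H^0(4K_X+P-f^*(\cH+P'))\neq 0$ for all $P\in V^0(\omega_X^2)$ and $P'\in\PB$. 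The extra copy of $K_X$ (fourth versus third) is needed precisely here: the twist $P$ ranges over $\mathscr{K}$, which may strictly contain $\pr_Z^*\PA_Z$ (see Remark~\ref{rem:after}), and absorbing this discrepancy costs one $K_X$. This is the actual obstruction, not the one you identify.
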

These bounds are optimal for varieties of maximal Albanese dimension.
On the one hand, if $X$ is a curve of genus $2$, then $|2K_X|$ is not birational (varieties of general type whose bicanonical map is not birational have been studied in \cite{BLNP,gv2}).
Furthermore, we produce varieties of dimension at least $4$, whose tricanonical map does not induce the Iitaka fibration (see Example \ref{ex:not3}).

We observe that when $\chi(X,\omega_X)>0$, the birationality of the tricanonical map was proved by Chen and Hacon \cite[Thm.~5.4]{CH}.
Hence, we restrict ourselves to the case $\chi(X,\omega_X)=0$.
In this situation we have a special fibration where the $m$-th pluricanonical linear system restricts surjectively to a general fibre for $m\geq 3$ (see Lemma \ref{lem:completeseries}).
On the base of this fibration, we construct two positive line bundles (see Lemmas \ref{lem:existencebundle} and \ref{lem:2isbirational}).
One of them induces a birational map $\phi$ on the base.
The other one is used to prove a non-vanishing result which shows that the fibration followed by $\phi$ factors through the pluricanonical map.
We use these line bundles to apply Lemma \ref{lem:1}, which allows us to proceed by induction on the dimension of $X$.
When $X$ is of general type, it is enough to consider the tricanonical map to get the non-vanishing mentioned above.
The lack of this non-vanishing
is what forces us to consider the tetracanonical map for non-general type varieties (as we note in Remark \ref{rem:after}).

\subsection*{Notation}
In the sequel, $X$ will always be a smooth complex projective variety of maximal Albanese dimension.
We denote by $a_X: X\rightarrow A_X$ the Albanese morphism.
Given an abelian variety $A$, let $\PA$ be its dual.
For a morphism $t: X\rightarrow A$ to an abelian variety and a coherent sheaf $\cF$ on $X$, we denote by $V^i(\cF, t)$ the $i$-th cohomological support loci:
\begin{equation*}\{P\in\Pic^0(A)\mid H^i(X, \cF\otimes t^*P)\neq 0\}.\end{equation*}
We will use the terms $M$-regular, I.T.-index $j$ ($IT^j$ for short) and continuously globally generated ({\em cgg} for short) as they are defined in \cite{pp1}. We will say that $\cF$ is a $GV$-sheaf if $\codim_{\Pic^0(X)} V^i(\cF,a_X)\geq i$ for all $i\geq 0$.
Given two divisors $E, F$, we will write $E\preceq F$ if $F-E$ is effective.

\section{Preliminaries}
We begin with some easy lemmas.

\begin{lemm}\label{lem:1}
Let $f: X\xrightarrow{g} Z\xrightarrow{h} Y$ be fibrations between smooth projective varieties.
Let $L$ be an line bundle on $X$.
If the following two conditions hold:
\begin{itemize}
\item[1)] The image of $H^0(X, L)\rightarrow H^0(X_y, L|_{X_y})$ induces a map birationally equivalent to $\res{g}{X_y}: X_y\rightarrow Z_y$ for a general fibre $X_y$ of $f$;
\item[2)] There are line bundles $H_i$, $1\leq i\leq M$, on $Y$ such that $L-f^*H_i$ is effective and the multiple evaluation map
\begin{equation*}
\varphi_Y: Y\rightarrow \mathbf{P}(H^0(Y, H_1)^*)\times\cdots\times \mathbf{P}(H^0(Y, H_M)^*)
\end{equation*}
is birational.
\end{itemize}
Then, the linear system $|L|$ induces a map birationally equivalent to $g: X\rightarrow Z$.
\end{lemm}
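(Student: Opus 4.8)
The plan is to construct the map induced by $|L|$ explicitly and compare it with $g$ by separating the problem into the "horizontal" direction (along the fibres $X_y$ of $f$) and the "vertical" direction (along the base $Y$). First I would choose, for each $i$ with $1\leq i\leq M$, an effective divisor $D_i \in |L - f^*H_i|$ given by condition 2), so that $f^*H_i \preceq L$ and hence $H^0(Y,H_i) \hookrightarrow H^0(X, f^*H_i) \hookrightarrow H^0(X,L)$, the last inclusion being "multiply by $D_i$". This realizes each of the $M$ linear subsystems $f^*|H_i| + D_i \subseteq |L|$, and therefore the rational map $\psi_L\colon X\dashrightarrow \mathbf{P}(H^0(X,L)^*)$ dominates (up to the Segre embedding and a linear projection) the composite $\varphi_Y\circ f\colon X \dashrightarrow \mathbf{P}(H^0(Y,H_1)^*)\times\cdots\times\mathbf{P}(H^0(Y,H_M)^*)$. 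Since $\varphi_Y$ is birational onto its image by hypothesis, this already shows that $\psi_L$ separates points lying on different fibres of $f$ over general points of $Y$; equivalently, $\psi_L$ factors through $f$ generically, say $\psi_L$ is birationally equivalent to $f$ followed by an embedding, on the locus where we have not yet used condition 1).

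Next I would use condition 1) to refine this along a general fibre. Fix a general $y\in Y$ and consider the general fibre $X_y = g^{-1}(Z_y)$ of $f$ over $y$ (note $Z_y = h^{-1}(y)$). Restriction gives $H^0(X,L)\to H^0(X_y, L|_{X_y})$, and by 1) the image defines a map birationally equivalent to $\res{g}{X_y}\colon X_y \to Z_y$. Combining the two pieces: a general point $x\in X$ has a well-defined image $y = f(x)\in Y$ recovered (birationally) from the $\varphi_Y$-coordinates via the $D_i$, and then within $X_y$ its image is recovered (birationally) as $g(x)\in Z_y$ from the remaining sections. Concretely, take two general points $x, x'\in X$ with $\psi_L(x) = \psi_L(x')$. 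The vertical part forces $f(x) = f(x')=:y$, so $x,x'\in X_y$; then the restricted sections have the same image in $\mathbf{P}(H^0(X_y,L|_{X_y})^*)$, so by 1) $g(x) = g(x')$ in $Z_y$, hence in $Z$. Thus $\psi_L$ separates general points with distinct $g$-images. Conversely $\psi_L$ is clearly constant on fibres of $g$ over the general point of $Z$, because all the sections we built — the $f^*|H_i|+D_i$ and, by 1), a basis adapted to $\res{g}{X_y}$ — are pulled back from $Z$ near the generic fibre of $g$ (the $H_i$ part is pulled back from $Y$, hence from $Z$; the fibrewise part is pulled back from $Z_y$ by 1)). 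Therefore $|L|$ induces a map birationally equivalent to $g$.

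To make the factorization "$\psi_L$ is constant on $g$-fibres" rigorous I would pass to a common resolution: replace $X$ by a smooth model on which the movable part of $|L|$ is base-point free, so that $\psi_L$ becomes a morphism, and observe that its Stein factorization must coincide birationally with $g$ because we have shown it both separates general $g$-fibres and contracts them. The main obstacle I anticipate is precisely the bookkeeping in this last step: condition 1) is only a statement about a general fibre of $f$ and about the image of a restriction map, so one has to argue that the sections witnessing birationality of $\res{g}{X_y}$ spread out to sections of $L$ on $X$ (which they do, since restriction $H^0(X,L)\to H^0(X_y,L|_{X_y})$ is the given map), and that the resulting map is honestly birational to $g$ rather than to some intermediate fibration between $g$ and $f$. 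This is handled by the point-separation argument above, using that $h\circ g = f$ so that "$f(x)=f(x')$ and $g(x)=g(x')$ in $Z_{f(x)}$" is equivalent to "$g(x)=g(x')$". No deep input is needed beyond these elementary comparisons of linear systems and the given hypotheses.
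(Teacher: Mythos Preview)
Your proof is correct and follows essentially the same approach as the paper's: use the effectivity of $L-f^*H_i$ to produce a rational projection from $\mathbf{P}(H^0(X,L)^*)$ to the product of the $\mathbf{P}(H^0(Y,H_i)^*)$, so that condition 2) forces $\varphi_{|L|}$ to separate general fibres of $f$, and then condition 1) says that on each such fibre $X_y$ the restricted map is birationally $g|_{X_y}$, which together yield that $\varphi_{|L|}$ is birationally equivalent to $g$. The paper's proof is a three-line sketch of exactly this diagram-chasing argument; your version simply fills in the details (the point-separation and the constancy on $g$-fibres) more explicitly.
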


\begin{proof}
Since the $L-f^*H_i$ are effective we have a map $\pi$ that induces the following diagram
\begin{equation*}
\xymatrix{X \ar@/_2pc/[dd]_f\ar[rr]^{\varphi_{|L|}}\ar[d]_g &&\mathbf{P}(H^0(X,L)^*) \ar@{>}[dd]^\pi\\
Z\ar@{.>}[urr]_{\varphi_Z}\ar[d]_h\\
Y\ar[rr]^(.25){\varphi_Y} && \mathbf{P}(H^0(Y, H_1)^*)\times\cdots\times \mathbf{P}(H^0(Y, H_M)^*)
}
\end{equation*}
Condition 2) guarantees that $\varphi_{|L|}$ separates generic
fibres of $f$ and condition 1) shows that the map $\varphi_{|L|}$
factorizes as $\varphi_Z\circ g$
and a general fibre of $h$ is mapped birationally via $\varphi_Z$.
\end{proof}

We will need the following lemma to ensure the birationality of $\varphi_Y$ in the previous lemma.

\begin{lemm}\label{lem:2} Let $\Pp: \PY\rightarrow Y$ be a generically abelian Galois covering between smooth projective varieties of maximal Albanese dimensions.
Denote by $G$ the Galois group of $\Pp$.
We denote $b_{\PY}=a_Y\circ \Pp: \PY\rightarrow A_Y$.
Assume that $V^0(\omega_{\PY}, b_{\PY})=\Pic^0(Y)$ and
\begin{equation*}\Pp_*\omega_{\PY}^2=\bigoplus_{\chi\in G^*}\cH_{\chi},\end{equation*}
where $\cH_{\chi}$ is the torsion-free rank-one sheaf corresponding to the character $\chi\in G^*$.

Then, there exists $\cH_{\chi_0}$ such that the multiple evaluation map
\begin{equation*}
\varphi_{P_1\cdots P_M}: Y\rightarrow \mathbf{P}(H^0(Y, \cH_{\chi_0}\otimes P_1)^*)\times\cdots\times \mathbf{P}(H^0(Y, \cH_{\chi_0}\otimes P_M)^*)
\end{equation*}
is birational for some $P_i\in\Pic^0(Y)$, $1\leq i\leq M$.
\end{lemm}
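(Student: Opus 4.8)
The goal is to produce a single rank-one sheaf $\cH_{\chi_0}$ appearing in $\Pp_*\omega_{\PY}^2$ whose twists by suitable $P_i\in\Pic^0(Y)$ separate points on $Y$ birationally.  My strategy is to go back upstairs and use the hypothesis $V^0(\omega_{\PY},b_{\PY})=\Pic^0(Y)$, which says that the Albanese image of $\PY$ is ``big'' from the point of view of canonical cohomology.  First I would recall that for a variety $\PY$ of maximal Albanese dimension with $b_{\PY}\colon \PY\to A_Y$ the induced map, the sheaf $b_{\PY*}\omega_{\PY}$ is a $GV$-sheaf, and by Pareschi--Popa the condition $V^0(\omega_{\PY},b_{\PY})=\Pic^0(Y)$ (together with maximal Albanese dimension, so $V^0$ is actually all of $\Pic^0$ and not a proper subvariety) forces $b_{\PY*}\omega_{\PY}$ to be an $M$-regular — hence continuously globally generated — sheaf on $A_Y$.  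The standard Pareschi--Popa machinery then gives: for any finite set of general points, the twisted evaluation
\begin{equation*}
\bigoplus_i H^0(A_Y, (b_{\PY*}\omega_{\PY})\otimes P_i)\otimes P_i^{-1}\longrightarrow (b_{\PY*}\omega_{\PY})_{\text{at those points}}
\end{equation*}
is surjective for generic $P_i\in\PA_Y$, so that $\omega_{\PY}$ itself (pulled back along $b_{\PY}$) defines, after such twists, a birational map — in fact it separates the general fibres of the Albanese map, which are points since $\dim \PY=\dim A_Y$... here one must be a little careful because $\PY\to A_Y$ need not be generically finite onto $A_Y$, only maximal Albanese dimension, so one separates points in the Stein factorization / Albanese image, which is enough for birationality of the relevant map.

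The second and crucial step is to \emph{descend} this to $Y$ and, more importantly, to replace $\omega_{\PY}$ (or $\omega_{\PY}^2$) by a \emph{single} summand $\cH_{\chi_0}$.  Because $\Pp$ is generically abelian Galois with group $G$, we have the eigensheaf decomposition $\Pp_*\omega_{\PY}^2=\bigoplus_{\chi\in G^*}\cH_\chi$ given in the hypothesis, and correspondingly $\Pp_*\omega_{\PY}=\bigoplus_\chi \cL_\chi$ for rank-one sheaves $\cL_\chi$ with $\cL_\chi\otimes\cL_\psi\hookrightarrow\cH_{\chi\psi}$ (up to the ramification correction; this is where the ``torsion-free rank-one sheaf corresponding to the character'' bookkeeping enters).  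Running the argument of Step 1 on $A_Y$ shows $\bigoplus_\chi a_{Y*}\cL_\chi$ is cgg, and since cgg is detected summand-by-summand after tensoring with generic $P\in\Pic^0$ — each $a_{Y*}\cL_\chi$ has $V^0$ either empty or all of $\Pic^0(Y)$, because the $V^0$ of the full pushforward is everything and the pieces are permuted appropriately — there is at least one character, say $\chi_1$, with $a_{Y*}\cL_{\chi_1}$ nonzero after a generic twist; and then $\cL_{\chi_1}^{\otimes 2}\hookrightarrow \cH_{\chi_1^2}$, so I set $\chi_0=\chi_1^2$.  One checks $\cH_{\chi_0}$ still has $V^0(\cH_{\chi_0}\otimes -)=\Pic^0(Y)$ and inherits enough positivity (it contains the square of a ``positive'' piece) to be cgg on $A_Y$ after a generic twist, hence the multiple evaluation map $\varphi_{P_1\cdots P_M}$ for generic $P_i$ is birational onto its image by Pareschi--Popa's criterion for cgg sheaves producing birational maps via multiple twisted evaluations.

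The main obstacle is the bookkeeping in Step 2: controlling the $V^0$ loci and the continuous global generation of the \emph{individual} eigensheaves $\cL_\chi$ and $\cH_\chi$ when one only knows it for the direct sums, and tracking how the ramification divisor of $\Pp$ corrects the naive multiplicativity $\cL_\chi\otimes\cL_\psi=\cH_{\chi\psi}$ so that the inclusion $\cL_{\chi_1}^{\otimes 2}\hookrightarrow\cH_{\chi_0}$ really does transport positivity (one needs the quotient to be supported on the branch locus, which does not kill global sections after a generic $\Pic^0$-twist).  A secondary technical point is that $b_{\PY}$ is only generically finite, not finite, so ``$\varphi$ birational'' should be read as ``birational onto its image equal to (a model of) $Y$'', and I would phrase the cgg-to-birationality step accordingly; since $Y$ itself is assumed of maximal Albanese dimension this causes no real trouble.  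Once $\chi_0$ is fixed, the birationality of $\varphi_{P_1\cdots P_M}$ for a suitable number $M$ of generic twists is a direct citation of the continuous-global-generation criterion in \cite{pp1}.
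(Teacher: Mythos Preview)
Your overall architecture matches the paper's: decompose $\Pp_*\omega_{\PY}=\bigoplus_\chi\cL_\chi$, pick $\chi_1$ with $V^0(\cL_{\chi_1},a_Y)=\Pic^0(Y)$, set $\chi_0=\chi_1^2$, and use the inclusion $\cL_{\chi_1}^{\otimes 2}\hookrightarrow\cH_{\chi_0}$ coming from the $G$-equivariant map $(\Pp_*\omega_{\PY})^{\otimes 2}\to\Pp_*\omega_{\PY}^2$. That part is fine. But there are two genuine gaps.

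First, the assertion that $V^0(\omega_{\PY},b_{\PY})=\Pic^0(Y)$ forces $b_{\PY*}\omega_{\PY}$ to be $M$-regular is simply false: $b_{\PY*}\omega_{\PY}$ is only a $GV$-sheaf, and having $V^0$ equal to everything says nothing about $\codim V^i$ for $i>0$. Indeed the paper never claims $M$-regularity here; it only uses $V^0(\cL_{\chi_1},a_Y)=\Pic^0(Y)$ to conclude that $\cL_{\chi_1}^{\otimes 2}\otimes P$, and hence $\cH_{\chi_0}\otimes P$, is globally generated over a fixed open dense subset $U\subset Y$ for every $P$.

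Second, and more seriously, you conflate continuous global generation with birationality. Cgg (or generic global generation for all twists) only tells you that each $\varphi_{P}$ is a morphism on $U$; it does not separate points. To get birationality you must show that for general $y\in Y$ the sheaf $a_{Y*}(\cI_y\otimes\cH_{\chi_0})$ is $M$-regular, and this you cannot extract from cgg of $\cH_{\chi_0}$ alone. The paper's missing ingredient is the asymptotic multiplier ideal: one passes to the $G$-stable subsheaf
\[
\Pp_*\bigl(\omega_{\PY}^2\otimes\cJ(\|\omega_{\PY}\|)\bigr)=\bigoplus_{\chi}\cH_\chi',
\qquad \cH_\chi'\subset\cH_\chi,
\]
which has the same $H^0$ as $\cH_\chi$ but, by Nadel vanishing, satisfies $H^i(Y,\cH_\chi'\otimes P)=0$ for all $i\ge 1$ and all $P\in\Pic^0(Y)$. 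Thus $a_{Y*}\cH_{\chi_0}'$ is $IT^0$, and for $y\in U\setminus\Exc(a_Y)$ the short exact sequence $0\to\cI_y\otimes\cH_{\chi_0}'\to\cH_{\chi_0}'\to\C_y\to 0$ shows $a_{Y*}(\cI_y\otimes\cH_{\chi_0}')$ is $M$-regular, hence cgg; this is what actually produces, for any pair $y\neq z$, a twist $P$ and a divisor in $|\cH_{\chi_0}\otimes P|$ through $y$ and not through $z$. Without this multiplier-ideal step your argument stalls at ``the map is defined on an open set'' and never reaches injectivity.
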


If $\Pp$ is an isomorphism, then Lemma \ref{lem:2} is contained in the proof of \cite[Thm.~4.4]{CH}.

\begin{proof} We first write
\begin{equation*}\Pp_*\omega_{\PY}=\bigoplus_{\chi\in G^*}\cL_{\chi}.\end{equation*}
where $\cL_0=\omega_Y$ and all $\cL_i$ are torsion-free rank-one sheaves on $Y$.

Since $V^0(\omega_{\PY}, b_{\PY})=\Pic^0(Y)$, we conclude that there exists $\chi_1$ such that $V^0(\cL_{\chi_1}, a_Y)=\Pic^0(Y)$.
Denote by $Z$ the closed subset where $\cL_{\chi_1}$ is not locally free.
Then, for any $P\in\Pic^0(Y)$, the line bundle $\cL_{\chi_1}^{\otimes 2}\otimes P$ is globally generated on the open dense subset
\begin{equation*}Y\moins \left(Z\cup \cap_{P\in\Pic^0(Y)}\Bs(|\cL_{\chi_1}\otimes P|)\right).\end{equation*}

Since there is the natural $G$-map of torsion-free sheaves on $Y$,
\begin{equation*}
(\Pp_*\omega_{\PY})^{\otimes 2}\rightarrow \Pp_*\omega_{\PY}^2,
\end{equation*}
if we take $\chi_0=\chi_1^2$, then we have an inclusion $\cL_{\chi_1}^{2}\hookrightarrow \cH_{\chi_0}$.
Hence, there is an open dense subset $U$ of $Y$ such that the sheaf $\cH_{\chi_0}|_{U}$ is locally free and for any $P\in\Pic^0(Y)$, $\cH_{\chi_0}\otimes P$ is globally generated over $U$, i.e.~$\res{\varphi_{P}}{U}$ is a morphism.

On the other hand, we consider $\Pp_*(\omega_{\PY}^2\otimes
\cJ(||\omega_{\PY}||))$.
Since $\cJ(||\omega_{\PY}||)\hookrightarrow\cO_{\PY}$ is $G$-invariant, we can write
\begin{equation*}\Pp_*(\omega_{\PY}^2\otimes
\cJ(||\omega_{\PY}||))=\bigoplus_{\chi\in G^*}\cH_{\chi}',\end{equation*}
where $\cH_{\chi}'$ is torsion-free sub-sheaf of $\cH_{\chi}$.
Moreover, we have
\begin{equation*}H^0(Y, \cH_{\chi}')\simeq H^0(Y, \cH_{\chi}),\end{equation*}
and
\begin{equation*}H^i(Y, \cH_{\chi}'\otimes P)=0,\end{equation*}
for any $P\in\Pic^0(Y)$ and $i\geq 1$.

Therefore, $\cH_{\chi_0}'\otimes P$ is globally generated and locally free on the open subset $U$ and $\Supp(\cH_{\chi_0}/\cH_{\chi_0}')$ is contained in
$Y\moins U$.
Now, let $V=U\moins \Exc(a_Y)$ and for any point $y\in V$, from the exact sequence
\begin{equation*}0\rightarrow \cI_y\otimes \cH_{\chi_0}'\rightarrow \cH_{\chi_0}'\rightarrow \mathbf{C}_y\rightarrow 0,\end{equation*}
we see that $a_{Y*}\big(\cI_y\otimes \cH_{\chi_0}'\big)$ is a $M$-regular sheaf, so it is {\em cgg} (see \cite[Prop.~2.13]{pp1}).
Hence for any $z\in V$ different from $y$, there exists $P\in\Pic^0(Y)$ such that $\cI_y\otimes \cH_{\chi_0}'\otimes P$ is globally generated on $z$.

This shows that for any two different points $y, z\in V$ there exists $P\in\Pic^0(Y)$ and a divisor $D_P$ in $|\cH_{\chi_0}\otimes P|$ such that $y\in D_P$ but $z\notin D_P$.
Therefore $\varphi_P(y)\neq \varphi_P(z)$.

We take $P_1,\ldots, P_M$ such that $\varphi_{P_1\cdots P_M}$ becomes stable, namely $\varphi_{P_1\cdots P_M}$ is birational equivalent to $\varphi_{P_1\cdots P_M P}$ for any $P\in\Pic^0(Y)$.
Then, $\varphi_{P_1\cdots P_M}$ is birational.
\end{proof}

The following lemma should be compared to \cite[Lem.~3.1]{CH1}.

\begin{lemm}\label{lem:K_X/Yeff}
Let $f: X\rightarrow Y$ be a surjective morphism between smooth projective varieties.
Assume that $X$ is of maximal Albanese dimension.
Then, $K_{X/Y}$ is effective.
\end{lemm}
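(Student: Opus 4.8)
The plan is to show directly that the line bundle $\cO_X(K_{X/Y}) = \omega_X \otimes f^*\omega_Y^{-1}$ has a nonzero global section, by manufacturing one out of the global holomorphic $1$-forms on $X$; maximal Albanese dimension enters precisely because it guarantees that $X$ has ``enough'' of these. Set $d = \dim X - \dim Y$.

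First I would introduce a natural morphism $\alpha\colon \Omega^d_X \longrightarrow \omega_X \otimes f^*\omega_Y^{-1}$, obtained as the adjoint of the pairing $f^*\omega_Y \otimes \Omega^d_X \to \Omega^{\dim Y}_X \otimes \Omega^d_X \to \Omega^{\dim X}_X = \omega_X$ given by pullback of $(\dim Y)$-forms along $f$ followed by wedge product. The point to record is the behaviour of $\alpha$ at the generic point $\eta_X$ of $X$: in characteristic $0$ the cotangent sequence $0 \to f^*\Omega^1_Y \to \Omega^1_X \to \Omega^1_{X/Y} \to 0$ is exact with locally free terms near $\eta_X$, taking top exterior powers identifies $\cO_X(K_{X/Y})_{\eta_X}$ canonically with $\wedge^d \Omega^1_{X/Y,\eta_X}$, and under this identification $\alpha_{\eta_X}$ becomes the $d$-th exterior power of the quotient map $\Omega^1_X \tto \Omega^1_{X/Y}$. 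Hence any $\beta \in H^0(X, \Omega^d_X)$ whose image in $\wedge^d\Omega^1_{X/Y,\eta_X}$ is nonzero has $\alpha(\beta) \neq 0$, which already forces $K_{X/Y}$ to be linearly equivalent to an effective divisor.

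It thus remains to find such a $\beta$, and it will be a decomposable form built from global $1$-forms. Here I would use that $X$ has maximal Albanese dimension, i.e.\ that $a_X$ is generically finite; dually, $a_X^*\Omega^1_{A_X} \to \Omega^1_X$ is surjective at $\eta_X$, and since $\Omega^1_{A_X}$ is trivial this says exactly that the evaluation map $H^0(X, \Omega^1_X) \otimes_\C \cO_X \to \Omega^1_X$ is surjective at the generic point. Composing with $\Omega^1_X \tto \Omega^1_{X/Y}$, the global $1$-forms therefore span the $d$-dimensional $\C(X)$-vector space $\Omega^1_{X/Y,\eta_X}$, so one may choose $\eta_1, \dots, \eta_d \in H^0(X, \Omega^1_X)$ whose images there form a basis; then $\beta = \eta_1 \wedge \cdots \wedge \eta_d$ does the job, and $\alpha(\beta) \in H^0(X, \cO_X(K_{X/Y}))$ is the section we want. (For $d = 0$ this degenerates to the well known fact that a dominant generically finite morphism of smooth varieties has effective ramification divisor.) I expect the only genuinely delicate step to be the compatibility claimed above — that $\alpha$, localized at $\eta_X$, is literally $\wedge^d$ of $\Omega^1_X \tto \Omega^1_{X/Y}$ — which is a bookkeeping check with the multiplicativity of the determinant along the cotangent sequence; the rest is formal. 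One could also avoid $\alpha$ altogether: complete $\eta_1, \dots, \eta_d$ by pullbacks of a rational coframe of $Y$ to a rational coframe of $X$, wedge to obtain a rational section of $\omega_X$, and divide by the corresponding rational $(\dim Y)$-form on $Y$; the resulting rational section of $\cO_X(K_{X/Y})$ is regular because $\eta_1 \wedge \cdots \wedge \eta_d$ is a regular $d$-form — but this variant costs a small local computation along the branch locus of $f$ that the sheaf map $\alpha$ renders unnecessary.
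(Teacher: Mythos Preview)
Your proof is correct. Both your argument and the paper's rest on the same key input — that maximal Albanese dimension makes $\Omega^1_X$ generically globally generated — but they package it differently. The paper takes the saturation $\cF$ of $f^*\Omega^1_Y$ inside $\Omega^1_X$ and writes $K_{X/Y} = (\det\cF - f^*K_Y) + \det\cQ$ with $\cQ = \Omega^1_X/\cF$; the first summand is effective because passing to the saturation only enlarges the determinant, and the second because $\cQ$ is a generically globally generated torsion-free sheaf. You instead build a global sheaf map $\alpha\colon \Omega^d_X \to \cO_X(K_{X/Y})$ and feed it a decomposable $d$-form assembled from global $1$-forms chosen to be independent in $\Omega^1_{X/Y,\eta_X}$. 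Your route is a bit more direct and sidesteps the saturation step entirely; the paper's decomposition is more structural and isolates the two separate contributions to the effective divisor. In spirit the arguments meet in the middle: the paper's implicit reason that $\det\cQ$ is effective — wedge together global sections generating $\cQ$ at the generic point — is exactly your construction applied to $\cQ$ rather than to $\cO_X(K_{X/Y})$ directly.
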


\begin{proof}
We have the natural inclusion $f^*\Omega_Y^1\xrightarrow{i} \Omega_X^1$.
Denote by $\cF$ the saturation of $i(f^*\Omega_Y^1)$.
Then, $\det(\cF)-f^*K_Y$ is an effective divisor on $X$.
We then consider the exact sequence
\begin{equation*}
0\rightarrow \cF\rightarrow \Omega_X^1\rightarrow \cQ\rightarrow 0.
\end{equation*}
Since $X$ is of maximal Albanese dimension, $\Omega_X^1$ is generically globally generated and hence so is $\cQ$.

Hence $\det(\cQ)$ is also an effective divisor.
Hence $K_{X/Y}=\det(\cF)-f^*K_Y+\det(\cQ)$ is effective.
\end{proof}

The following lemma is used in \cite[Lem.~2.3]{j1} under the assumption that the plurigenera are positive (since $X$ is of maximal Albanese dimension, this condition is automatically satisfied).
We recall it here for easy reference.

\begin{lemm}\label{lem:dominates}
Suppose that $f: X \to Y$ is an algebraic fibre space between smooth projective varieties.
Assume that $Y$ is of general type.
Then, the Iitaka model of $(X,K_X+(m-2)K_{X/Y})$ dominates $Y$, for any $m\geq 2$.
\end{lemm}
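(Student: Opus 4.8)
The plan is to show that $f$ factors through the Iitaka fibration of the Cartier divisor $D:=K_X+(m-2)K_{X/Y}$, by recognising $f$ inside the linear systems $|nD|$ using the effectivity of $K_{X/Y}$ and the bigness of $K_Y$.

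First I would rewrite $D=f^*K_Y+(m-1)K_{X/Y}$ (via $K_{X/Y}=K_X-f^*K_Y$). Since $X$ has maximal Albanese dimension, Lemma~\ref{lem:K_X/Yeff} gives that $K_{X/Y}$ is effective, hence so is $E:=(m-1)K_{X/Y}$ for $m\geq 2$; fix $s_E\in H^0(X,\cO_X(E))$ with $\operatorname{div}(s_E)=E$. Because $f$ is an algebraic fibre space, $f_*\cO_X=\cO_Y$, so the projection formula gives $H^0(X,nf^*K_Y)=H^0(Y,nK_Y)$ for every $n\geq 0$; multiplying by $s_E^{\,n}$ then produces an injection
\[
H^0(Y,nK_Y)\hookrightarrow H^0(X,nf^*K_Y+nE)=H^0(X,nD),
\]
whose image I call $W_n\subseteq H^0(X,nD)$.

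The key point is that the subsystem $W_n$ of $|nD|$ recovers $f$. Indeed, $\varphi_{W_n}\colon X\dashrightarrow\mathbf{P}(W_n^*)$ is the composite of $\varphi_{|nD|}$ with the linear projection $\mathbf{P}(H^0(X,nD)^*)\dashrightarrow\mathbf{P}(W_n^*)$; on the other hand $W_n=s_E^{\,n}\otimes f^*H^0(Y,nK_Y)$, and twisting by the fixed section $s_E^{\,n}$ does not change the associated map, so $\varphi_{W_n}$ is the composite $X\xrightarrow{f}Y\xrightarrow{\varphi_{|nK_Y|}}\mathbf{P}(H^0(Y,nK_Y)^*)$. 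Since $Y$ is of general type, $K_Y$ is big, so $\varphi_{|nK_Y|}$ is birational onto its image for $n$ large and divisible; fix such an $n$, large and divisible enough that moreover $\varphi_{|nD|}\colon X\dashrightarrow Z$ realizes the Iitaka fibration of $(X,D)$, with $Z$ birational to the Iitaka model. The linear projection then restricts to a dominant rational map $Z\dashrightarrow\overline{\varphi_{|nK_Y|}(Y)}$, and the composite $X\dashrightarrow Z\dashrightarrow\overline{\varphi_{|nK_Y|}(Y)}$ equals $f$ followed by the birational map $Y\dashrightarrow\overline{\varphi_{|nK_Y|}(Y)}$. Hence $f$ factors through $\varphi_{|nD|}$, i.e.\ the Iitaka model $Z$ of $(X,D)$ dominates $Y$, as claimed.

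I expect the only delicate parts to be bookkeeping ones: checking that a single large divisible $n$ can simultaneously make $\varphi_{|nK_Y|}$ birational and make $\varphi_{|nD|}$ the Iitaka fibration up to birational equivalence, and tracking the dense open loci on which the various rational-map factorizations are valid. No cohomological or positivity input is needed beyond the effectivity of $K_{X/Y}$ (Lemma~\ref{lem:K_X/Yeff}) and the bigness of $K_Y$.
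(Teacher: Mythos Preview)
The paper does not give its own proof of this lemma; it simply cites \cite[Lem.~2.3]{j1}, noting that the result there is stated under the hypothesis that the plurigenera of $X$ are positive (automatic here since $X$ is of maximal Albanese dimension). So there is no in-paper argument to compare against, only the cited reference.

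Your argument is correct and pleasantly elementary. The one point to flag is that you invoke Lemma~\ref{lem:K_X/Yeff}, which needs $X$ to be of maximal Albanese dimension; this hypothesis is not written into the statement of Lemma~\ref{lem:dominates} but is the standing assumption throughout the paper, so in context the appeal is legitimate. Once $E=(m-1)K_{X/Y}$ is effective, the inclusion $H^0(Y,nK_Y)\hookrightarrow H^0(X,nD)$ via $s_E^{\,n}\cdot f^*(-)$ and the resulting factorisation of $\varphi_{|nK_Y|}\circ f$ through $\varphi_{|nD|}$ are exactly right; the bookkeeping about choosing a single large divisible $n$ is harmless, since $\varphi_{|n'D|}$ dominates $\varphi_{|nD|}$ whenever $n\mid n'$.

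Compared with the reference \cite{j1}, your route trades the hypothesis ``positive plurigenera'' for the stronger but readily available effectivity of $K_{X/Y}$, which lets you bypass any positivity theorems for $f_*\omega_{X/Y}^{\otimes k}$ and argue directly with linear subsystems. The payoff is a self-contained proof inside the paper's framework; the cost is that your argument, as written, does not cover the slightly more general setting (arbitrary $X$ with $P_m(X)>0$) that \cite{j1} addresses.
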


\section{Positive bundles on the base and surjectivity of the restriction map to a fibre}
We will use the following definition that it is strongly related to Hypotheses 4.7 in \cite{BLNP}.
\begin{defi}Let $\cF$ be a coherent sheaf on an abelian variety $A$.
We say that $\cF$ is almost $M$-regular if $V^0(\cF)=\PA$, $\codim_{\PA}V^i(\cF)\geq i+1$, for $1\leq i\leq \dim A-1$, and $\dim V^{\dim A}(\cF)=0$.
\end{defi}

Let $X$ be a smooth projective variety of dimension $n$ and maximal Albanese dimension.
We know that the pushforward of the canonical bundle $a_{X*}\omega_X$ is a $GV$-sheaf but it often fails to be $M$-regular, which makes the tricanonical map difficult to study.
Hence we consider the set
\begin{equation*}\cS_X:=\{0<j<n\mid \textrm{$V^j(\omega_X, a_X)$ has a component of codimension $j$}\},\end{equation*}
which measures how far $a_{X*}\omega_X$ is from being almost $M$-regular.

\begin{sett}\label{set:basic}
Assume that $\cS_X$ is not empty.
We denote by $k$ the maximal number of $\cS_X$ and $Q+\PB\subseteq V^k(\omega_X,a_X)$ a codimension-$k$ component, where $Q$ is zero or a torsion element of $\Pic^0(X)\moins \PB$.
Let the following commutative diagram
\begin{equation*}
\xymatrix{
X\ar[r]^{a_X}\ar[d]_{f} & A_X\ar[d]^{\pr}\\
Y \ar[r]^{a_Y} & B
}
\end{equation*}
be a suitable birational modification of the Stein factorization of the composition $\pr\circ a_X$, such that $Y$ is smooth.
\end{sett}

\begin{lemm}\label{lem:existencebundle}
Assume $\cS_X$ is not empty, so we are in Setting \ref{set:basic}.
Then, for some birational model of $f: X\rightarrow Y$, there exists a line bundle $\cL$ on $Y$ such that $a_{Y*}\cL$ is almost $M$-regular, $V^0(\cL,a_Y)=\PB$, and $\cO_X(K_X+jQ)\otimes f^*\cL^{-1}$ has a non-trivial section for some $j\in\mathbf{Z}$.
Moreover,
\begin{itemize}
\item[1)] if $Q$ is trivial, we can take $\cL$ to be $\omega_Y$ and $j=0$;
\item[2)] if $Q\in \Pic^0(X)\moins \PB$, then we can take $\cL$ such that $a_{Y*}\cL$ is $M$-regular.
\end{itemize}
\end{lemm}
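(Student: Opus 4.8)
The plan is to read $\cL$ off from the Chen--Jiang decomposition of $a_{X*}\omega_X$ near the component $Q+\PB$, and then to control the loci $V^\bullet(\cL,a_Y)$ by pitting the fibration $f$ against the maximality of $k$. Write $a_{X*}\omega_X\isom\bigoplus_i(\alpha_i^*\cF_i\ot P_i)$ with $\alpha_i\colon A_X\to A_i$ surjective homomorphisms, the $P_i$ torsion, and the $\cF_i$ $M$-regular with $V^0(\cF_i)=\PA_i$. Since an $M$-regular sheaf satisfies $V^{\ge1}(\cF_i)\subsetneq\PA_i$, the only summand that can cut out a component of $V^k(\omega_X,a_X)$ of codimension exactly $k$ is one with $\dim\Ker\alpha_i=k$ and $\alpha_i^*\PA_i=\PB$; that summand has $\alpha_i=\pr$ and, after modifying $\cF_i$ by a torsion line bundle, reads $\pr^*\cF\ot Q^{-1}$ with $\cF$ an $M$-regular sheaf on $B$ satisfying $V^0(\cF)=\PB$ (and $Q=\cO$ in case (1)).

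Next I would prove that the relative dimension $d$ of $f$ equals $k$. On one side, letting $\bar Y\subseteq B$ denote the image of $\pr\circ a_X$, the fact that $X$ has maximal Albanese dimension forces $a_X(X)\subseteq\pr^{-1}(\bar Y)$ with $\dim a_X(X)=\dim X$, so $d=\dim X-\dim\bar Y\le\dim\Ker\pr=k$. On the other side, Koll\'ar's decomposition gives, for $P'\in\Pic^0(B)$, a splitting $H^k(X,\omega_X\ot Q^{-1}\ot a_X^*\pr^*P')=\bigoplus_{i+j=k}H^i(Y,R^jf_*(\omega_X\ot Q^{-1})\ot a_Y^*P')$; since $Q+\PB\subseteq V^k(\omega_X,a_X)$, the irreducible group $\Pic^0(B)$ is covered by the loci $V^i(R^jf_*(\omega_X\ot Q^{-1}),a_Y)$, and because the sheaves $a_{Y*}R^jf_*(\omega_X\ot Q^{-1})$ are $GV$ on $B$, one of those loci --- necessarily that with $i=0$ --- must equal $\Pic^0(B)$; hence $R^kf_*(\omega_X\ot Q^{-1})\ne0$, so $k\le d$ and therefore $d=k$. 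In case (1) this yields, via relative duality together with $f_*\cO_X=\cO_Y$, the identification $R^kf_*\omega_X\isom\omega_Y$; consequently $V^0(\omega_Y,a_Y)=\PB$, while the section demanded of $\cO_X(K_X)\ot f^*\omega_Y^{-1}=\cO_X(K_{X/Y})$ exists by Lemma~\ref{lem:K_X/Yeff}. So in case (1) we may take $\cL=\omega_Y$, $j=0$, and it only remains to check the regularity of $a_{Y*}\omega_Y$.

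For that, first note that by Serre duality on $Y$ and the injectivity of $a_Y^*$ on $\Pic^0$, the locus $V^{\dim B}(\omega_Y,a_Y)$ is empty or equal to $\{\cO\}$, hence $0$-dimensional. Now suppose $V^l(\omega_Y,a_Y)$ had, for some $1\le l\le\dim B-1$, a component $T$ of codimension $l$. Feeding the identification $\omega_Y=R^kf_*\omega_X$ into the Koll\'ar splitting in cohomological degree $k$ gives a direct-summand inclusion $H^l(Y,\omega_Y\ot a_Y^*P')\hookrightarrow H^{l+k}(X,\omega_X\ot a_X^*\pr^*P')$ for $P'\in T$, so $\pr^*T\subseteq V^{l+k}(\omega_X,a_X)$ is irreducible of codimension $k+l$ in $\Pic^0(X)$. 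But $k+l>k=\max\cS_X$, so either $k+l\ge\dim X$, in which case $V^{k+l}(\omega_X,a_X)$ is at most a point and cannot contain the positive-dimensional $\pr^*T$, or $k+l<\dim X$, in which case generic vanishing forces every component of $V^{k+l}(\omega_X,a_X)$ to have codimension at least $k+l+1$ and so again cannot contain $\pr^*T$. This contradiction completes case (1).

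For case (2), $Q$ has some order $m>1$, and I would pass to the connected \'etale cyclic $\Z/m$-cover $\tilde\pi\colon\tilde X\to X$ determined by $Q$, to the induced cover $\tilde\pi_Y\colon\tilde Y\to Y$, and to the fibration $\tilde f\colon\tilde X\to\tilde Y$, which again has relative dimension $k$. Under $\tilde\pi^*$ the component $Q+\PB$ becomes a component of $V^k(\omega_{\tilde X},a_{\tilde X})$ passing through the origin, so rerunning the analysis above on $\tilde Y$ identifies a rank-one eigen-summand $\cL$ of $\tilde\pi_{Y*}\omega_{\tilde Y}$ --- the one attached to the character of $\Z/m$ dual to $Q$ --- for which $V^0(\cL,a_Y)=\PB$; moreover $a_{Y*}\cL$ is now genuinely $M$-regular, since the point of $V^{\dim B}$ that obstructed full $M$-regularity in case (1) lies only in the trivial-character summand $\omega_Y$. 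Pushing $K_{\tilde X/\tilde Y}\ge0$ down through the two covers and isolating the relevant eigencomponent then produces the asserted section of $\cO_X(K_X+jQ)\ot f^*\cL^{-1}$ for a suitable $j$. The step I expect to be the real obstacle is the two-directional comparison between cohomology on $X$ and on $Y$: one must know that Koll\'ar's decomposition persists after the twist by $a_Y^*P'$ and that the birational model of $f$ can be chosen so that $a_Y$ behaves like a finite morphism, guaranteeing $R^{>0}a_{Y*}R^jf_*(\omega_X\ot Q^{-1})=0$; this, together with the eigen-sheaf bookkeeping of case (2), is where the technical weight sits, while the maximality of $k$ enters only through the clean numerical contradiction of the third paragraph.
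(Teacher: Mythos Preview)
Your proposal is correct and, once past the first paragraph, follows the paper's proof closely: both arguments identify the fibre dimension of $f$ as $k$, use Koll\'ar's splitting together with $R^kf_*\omega_X\isom\omega_Y$ to show $V^0(\omega_Y,a_Y)=\PB$, derive the almost $M$-regularity of $a_{Y*}\omega_Y$ from the maximality of $k$ via the inclusion $\pr^*V^l(\omega_Y,a_Y)\subseteq V^{k+l}(\omega_X,a_X)$, invoke Lemma~\ref{lem:K_X/Yeff} for the section, and in case~(2) pass to the cyclic \'etale cover determined by $Q$ and take $\cL$ to be a nontrivial eigen-summand of $\Pp_*\omega_{\PY}$.

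The one genuine difference is your opening: you frame the construction through the Chen--Jiang decomposition of $a_{X*}\omega_X$, extracting an $M$-regular sheaf $\cF$ on $B$. The paper never invokes this decomposition; it works directly with $R^kf_*(\omega_X\otimes Q)$ and cites \cite{gl} for the fibre dimension. Your $\cF$ is morally $a_{Y*}\omega_Y$ (up to the summand bookkeeping), so the Chen--Jiang paragraph is conceptually illuminating but logically redundant---you do not actually use $\cF$ once you set $\cL=\omega_Y$. Two minor points: the twist in your Koll\'ar decomposition should be by $Q$ rather than $Q^{-1}$ (since $Q+\PB\subseteq V^k$ means $H^k(X,\omega_X\otimes Q\otimes\pr^*P')\neq0$), and in case~(2) the induced cover $\tilde Y\to Y$ is only \emph{generically} Galois, not \'etale---this is why the paper is careful to take a further birational modification to make the $\cL_i$ locally free.
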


\begin{proof}
We know by \cite[Theorem 0.1]{gl} that the dimension of a general fibre of $f$ is $k$.

Assume first that $Q$ is trivial.
For $P_B\in \PB-\cup_j V^1(R^jf_* \omega_X, a_Y)\neq \emptyset$ (e.g.~\cite[Thm.~5.8]{PP-GV}),
\begin{align*}
0&< h^k(X,\omega_X\otimes a_X^*P_B)=h^k(X,\omega_X\otimes f^*P_B)\\
&=h^0(Y,R^k f_*\omega_X\otimes a_Y^*P_B) & \text{e.g.~{\cite[Prop.~3.14]{PP-GV}}}\\
&=h^0(Y,\omega_Y \otimes a_Y^*P_B) & \text{{\cite[Prop.~7.6]{kollar1}}}.
\end{align*}
By generic vanishing $\chi(Y,\omega_Y)=h^0(Y,\omega_Y \otimes a_Y^*P_B)$ for a general $P_B\in \PB$.
Hence $\chi(Y,\omega_Y)>0$ and $V^0(\omega_Y, a_Y)=\Pic^0(Y)$.

Moreover, the pull-back by $\pr$ of any codimension-$j$ component of $V^j(\omega_{Y}, a_{Y})$ is a codimension-$(j+k)$ component of $V^{j+k}(\omega_{X},a_{X})$ (by \cite[Thm.~3.1]{kollar2}).
Hence by the maximality of $k$, we know that $$\codim_{\PB}V^i(\omega_Y, a_Y)\geq i+1$$ for all $0<i<\dim Y$ and $a_{Y*}\omega_Y$ is almost $M$-regular.
By Lemma \ref{lem:K_X/Yeff}, $\cO_X(K_X-f^*K_Y)$ is effective.

Now, assume that $Q\in \Pic^0(X)\moins \PB$.
We may choose $Q$ such that $G\cap \PB=0$, where $G:=\langle Q\rangle$ is the subgroup generated by $Q$.\\

\paragraph{\bf Claim} Up to a birational modification of $X$ and $Y$, there exists a diagram
\begin{equation}\label{eq:diag-existence}
\xymatrix{
\wX\ar[r]^{\pi}\ar[d]^{\wf}\ar@/^2pc/[rr]_{b_{\wX}} & X\ar[r]^{a_X}\ar[d]^{f} & A_X\ar[d]^{\pr}\\
\PY\ar[r]^{\Pp}\ar@/_2pc/[rr]^{b_{\PY}} & Y \ar[r]^{a_Y} & B
}
\end{equation}
where all varieties are smooth and the vertical morphisms are fibrations (in particular, $\PY$ is a modification of the Stein factorization of $f\circ \pi$).
Moreover $\pi:\wX\to X$ is birational to $G$-cover, $\Pp: \PY\to Y$ is a generically $G$-cover, and $\wf$ is a $G$-equivariant morphism.We also have
\begin{equation}
\label{eq:defLi}
\Pp_*\omega_{\PY}=\oplus_i\cL_i,
\end{equation}
where $\cL_0=\omega_Y$ and all $\cL_i$ are torsion-free rank-one sheaves on $Y$.

\begin{proof}[Proof of the claim]
\renewcommand{\qedsymbol}{}
We first consider the \'{e}tale cover $\pi: \wX\rightarrow X$ induced by $G$.
Let $\PY$ be the Stein factorization of $f\circ \pi$.
By \cite[Thm. 5.8 and Prop. 3.14]{PP-GV}, $R^jf_*(\omega_X\otimes Q)$ are $GV$-sheaves for all $j\geq 0$.
Then by \cite[Thm. 3.1]{kollar2}, for $P_B\in \PB$ general $h^k(\omega_X\otimes Q \otimes a_Y^*P_B)= h^0(Y, R^kf_*(\omega_X\otimes Q)\otimes a_Y^*P_B)$.
Since $Q+\PB\subset V^k(\omega_X, a_X)$, $R^kf_*(\omega_X\otimes Q)\neq 0$.
If we denote by $X_y$ a general fibre of $f$, then we know that $\res{Q}{X_y}$ is $\cO_{X_y}$.
Hence, a general fibre of $\wf$ is isomorphic via $\pi$ to a general fibre of $f$.
So $G$ is also the Galois group of the field extension $k(\PY)/k(Y)$.
After modifications of $\wf: \wX\rightarrow \PY$, we may assume that $G$ acts also on $\PY$, $\wf$ is a $G$-equivariant morphism,
and $\Pp: \PY\rightarrow Y$ is a generically $G$-cover of smooth projective varieties.
Hence
\begin{equation*}
\Pp_*\omega_{\PY}=R^k(f\circ\pi)_*\omega_{\wX}=\oplus_i(R^kf_*(\omega_X\otimes Q^i))=\oplus_i\cL_i,
\end{equation*}
where $\cL_0=\omega_Y$ and all $\cL_i$ are torsion-free rank-one sheaves on $Y$ and we conclude the proof of the Claim.
\end{proof}

The same arguments as in the case where $Q$ is trivial show that $V^0(\PY, b_{\PY})=\PB$ and $\codim_{\PB}V^i(\omega_{\PY}, b_{\PY})\geq i+1$ for all $0<i<\dim \PY$.
Moreover, we have
\begin{eqnarray*}\ker(\PB\rightarrow \Pic^0(\PY))&=&\PB\cap\ker(\PA_X\rightarrow\PA_{\wX})\\
&=& \PB\cap \langle Q\rangle=0.\end{eqnarray*}
Hence for $i\neq 0$, $V^{\dim Y}(\cL_i, a_Y)=\emptyset$ and $a_{Y*}\cL_i$ is $M$-regular (in particular, $\chi(Y,\cL_i)>0$).

We can take a modification $\epsilon: Y'\rightarrow Y$ such that
\begin{equation}\label{eq:defLi2}
\cL_i':=\epsilon^*\cL_i
\end{equation}
is locally free for all $i$.
Moreover, we consider a birational model of diagram (\ref{eq:diag-existence}):
\begin{eqnarray}\label{eq:modification}
\xymatrix@C=8pt@R=8pt{
&\wX\ar[rr]^(.3){\pi}\ar'[d][dd]_{\wf}&& X\ar[dd]^(.7){f}\\
\wX'\ar[ur]\ar[rr]^(.3){\pi'}\ar[dd]_(.7){\wf'}&& X'\ar[ur]_{\eta}\ar[dd]^(.7){f'}\\
&\PY\ar'[r][rr]_(-.4){\Pp}&& Y \\
\PY'\ar[ur]\ar[rr]_(.3){\Pp'}&& Y'\ar[ur]_{\epsilon}}
\end{eqnarray}
where all varieties are smooth, all slanted arrows are birational modifications, and $\pi'$ is birational equivalent to the \'{e}tale cover induced by $G$.

Since we have $\Pp^*\cL_i\hookrightarrow \omega_{\PY}$, we conclude that $\omega_{\PY'}\otimes \Pp'^*\cL_i'^{-1}$ has a non-trivial section.
We know $K_{\wX'/\PY'}$ is an effective divisor by Lemma \ref{lem:K_X/Yeff}.
Hence,
\begin{align*}
0&< h^0(\wX', \omega_{\wX'}\otimes \wf'^*\omega_{\PY'}^{-1})\leq h^0(\wX', \omega_{\wX'}\otimes \wf'^*\Pp'^*\cL_i'^{-1})\\
&= \sum_j h^0(X', \omega_{X'}\otimes \eta^*Q^j\otimes f'^*\cL_i'^{-1}).
\end{align*}
So there exists $j$ such that $\omega_{X'}\otimes \eta^*Q^j\otimes f'^*\cL_i'^{-1}$ is effective.
Thus in $2)$, we can take $f': X'\rightarrow Y'$ to be the birational model of $f$ and take $\cL$ to be any $\cL_i'$ for $i\neq 0$.
\end{proof}

\begin{lemm}\label{lem:2isbirational}
Assume $\cS_X$ is not empty, so we are in Setting \ref{set:basic}.
Consider the birational model obtained in Lemma \ref{lem:existencebundle}.
Then, after modifying $f$ by blowing-up $Y$, there exists a line bundle $\cH$ on $Y$ and $i\in \mathbf{Z}$ such that $\cO_X(2K_X+iQ)\otimes f^*\cH^{-1}$ has a non-trivial section.
Moreover we can take $P_i\in\Pic^0(Y)$, $1\leq i\leq M$ such that the multiple evaluation map
\begin{equation*}
\varphi_Y: Y\rightarrow \mathbf{P}(H^0(Y, \cH\otimes P_1)^*)\times\cdots\times \mathbf{P}(H^0(Y, \cH\otimes P_M)^*)
\end{equation*}
is birational.
\end{lemm}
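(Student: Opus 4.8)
The plan is to follow the case division of Lemma~\ref{lem:existencebundle}, producing $\cH$ as (essentially) the square of the line bundle $\cL$ obtained there and deducing the birationality of $\varphi_Y$ from Lemma~\ref{lem:2}.

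Suppose first that $Q$ is trivial, so by Lemma~\ref{lem:existencebundle} we may take $\cL=\omega_Y$. I would set $\cH=\omega_Y^2$ and $i=0$; the section of $\cO_X(2K_X)\ot f^*\cH^{-1}=\cO_X(2K_{X/Y})$ exists because $K_{X/Y}$ is effective by Lemma~\ref{lem:K_X/Yeff}. For the birationality of $\varphi_Y$ one applies Lemma~\ref{lem:2} with $\PY=Y$, $\Pp=\id$ and $G$ trivial: the hypothesis $V^0(\omega_Y,a_Y)=\Pic^0(Y)$ was already established in the proof of Lemma~\ref{lem:existencebundle}, one has $\Pp_*\omega_Y^2=\omega_Y^2=\cH$, and the conclusion provides $P_1,\dots,P_M$ for which $\varphi_Y$ is birational (this recovers the Chen--Hacon argument of \cite[Thm.~4.4]{CH}).

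Now suppose $Q\in\Pic^0(X)\moins\PB$. I would work with the generically $G$-cover $\Pp:\PY\to Y$ and the decomposition $\Pp_*\omega_{\PY}=\bigoplus_i\cL_i$ from the Claim in Lemma~\ref{lem:existencebundle}, after the modification~\eqref{eq:modification} making every $\cL_i$ locally free. The $G$-linearisation of $\omega_{\PY}^2$ gives a decomposition $\Pp_*\omega_{\PY}^2=\bigoplus_{\chi\in G^*}\cH_\chi$ into torsion-free rank-one eigensheaves, and the remaining hypotheses of Lemma~\ref{lem:2} ($\PY$ of maximal Albanese dimension, and $V^0(\omega_{\PY},b_{\PY})$ equal to the full $\Pic^0$ of the base) were checked inside the proof of Lemma~\ref{lem:existencebundle}. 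Lemma~\ref{lem:2} then produces $\chi_1$ with $V^0(\cL_{\chi_1},a_Y)=\Pic^0(Y)$, the character $\chi_0=\chi_1^2$, an inclusion $\cL_{\chi_1}^2\hookrightarrow\cH_{\chi_0}$, and classes $P_1,\dots,P_M$ making the evaluation map into $\prod_i\mathbf{P}(H^0(Y,\cH_{\chi_0}\ot P_i)^*)$ birational. After one further blow-up of $Y$ I may assume $\cH_{\chi_0}$ is a line bundle; set $\cH=\cH_{\chi_0}$. This already yields the last assertion.

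It remains to exhibit the section of $\cO_X(2K_X+iQ)\ot f^*\cH^{-1}$. The natural multiplication map $(\Pp_*\omega_{\PY})^{\ot 2}\to\Pp_*\omega_{\PY}^2$ of $G$-sheaves restricts on the $\chi_0$-eigensheaves to the inclusion $\cL_{\chi_1}^2\hookrightarrow\cH_{\chi_0}$; composing with the counit $\Pp^*\Pp_*\omega_{\PY}^2\to\omega_{\PY}^2$, using its compatibility with tensor products, and noting that $\Pp^*\cL_{\chi_1}\to\omega_{\PY}$ is non-zero (it is the map adjoint to the inclusion $\cL_{\chi_1}\hookrightarrow\Pp_*\omega_{\PY}$), one gets a non-zero map $\Pp^*\cH_{\chi_0}\to\omega_{\PY}^2$, i.e.\ a section of $\omega_{\PY}^2\ot\Pp^*\cH^{-1}$. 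Since $K_{\wX/\PY}$ is effective (Lemma~\ref{lem:K_X/Yeff}), this produces a section of $\omega_{\wX}^2\ot\wf^*\Pp^*\cH^{-1}=\omega_{\wX}^2\ot\pi^*f^*\cH^{-1}$, and pushing forward along the $G$-Galois, generically \'etale map $\pi$ gives
\[
0\ne H^0\bigl(\wX,\,\omega_{\wX}^2\ot\pi^*f^*\cH^{-1}\bigr)=\bigoplus_j H^0\bigl(X,\,\cO_X(2K_X+jQ)\ot f^*\cH^{-1}\bigr),
\]
so some $j=:i$ works. I expect this final step to be the main obstacle: one must carry the multiplication and adjunction maps consistently through the chain of birational modifications in~\eqref{eq:modification}, so that the eigensheaf $\cH_{\chi_0}$ delivered by Lemma~\ref{lem:2} is literally the bundle whose pull-back occurs in the section statement, and check that $\pi_*$ applied to the square of the birational-to-\'etale $G$-cover still produces the clean eigensheaf decomposition displayed above (the bookkeeping being of the same flavour as in the proof of Lemma~\ref{lem:existencebundle}).
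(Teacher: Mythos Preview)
Your proposal is correct and follows essentially the same route as the paper: in the trivial-$Q$ case you take $\cH=\omega_Y^2$ and invoke Lemma~\ref{lem:2} with $\Pp=\id$, and in the non-trivial case you apply Lemma~\ref{lem:2} to the cover $\Pp:\PY\to Y$ from Lemma~\ref{lem:existencebundle}, take $\cH=\cH_{\chi_0}$, and produce the section via $\Pp^*\cH\hookrightarrow\omega_{\PY}^2$, effectivity of $K_{\wX/\PY}$, and the eigendecomposition of $\pi_*$. The paper's own proof is terser---it simply asserts the inclusion $\Pp^*\cH\hookrightarrow\omega_{\PY}^2$ by adjunction from $\cH\hookrightarrow\Pp_*\omega_{\PY}^2$ rather than routing through $\cL_{\chi_1}^2$, and refers back to the argument of Lemma~\ref{lem:existencebundle} for the final step---but the content is the same, and your closing caveat about tracking the modifications in diagram~\eqref{eq:modification} is exactly the bookkeeping the paper sweeps under ``as in the proof of Lemma~\ref{lem:existencebundle}''.
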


\begin{proof}
If $Q$ is trivial, we just take $\cH$ to be $\omega_Y^2$.
By Lemma \ref{lem:2}, we conclude.

If $Q\in \Pic^0(X)\moins \PB$, then we use the same notations as in the proof of Lemma \ref{lem:existencebundle}.
Notice that we can consider diagram \eqref{eq:diag-existence} obtained in the Claim.
Now, we apply Lemma \ref{lem:2} to $\Pp$ and we take $\cH$ to be the direct summand $\cH_{\chi_0}$ of $\Pp_*\omega_{\PY}^2$.

As in diagram (\ref{eq:modification}), we can take an appropriate model of $f$ and assume that $\cH$ is a line bundle.
Since $\Pp^*\cH\hookrightarrow \omega_{\PY}^2$, as in the proof of Lemma \ref{lem:existencebundle}, there exists an integer $i\in\mathbf{Z}$ such that
$\cO_X(2K_X+iQ)\otimes f^*\cH^{-1}$ is effective.
\end{proof}

\begin{lemm}\label{lem:completeseries}
Assume $\cS_X$ is not empty, so we are in Setting \ref{set:basic}.
Let $\cL$ be the line bundle obtained in Lemma \ref{lem:existencebundle}.
Then, for $y$ a general point of $Y$, the restriction map
\begin{equation*}
H^0(X,\cO_X(mK_X-(m-3)f^*\cL)\otimes P)\rightarrow H^0(X_y, \cO_{X_y}(mK_{X_y})\otimes P)
\end{equation*}
is surjective, for any $m\geq 2$ and $P\in V^0(\omega_X^m, a_X)$.
\end{lemm}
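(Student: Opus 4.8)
The plan is to descend the statement to the base $Y$. Set
\[
\cF:=f_*\bigl(\cO_X(mK_X-(m-3)f^*\cL)\otimes P\bigr).
\]
Since $mK_X-(m-3)f^*\cL=m(K_X-f^*\cL)+3f^*\cL$, the projection formula gives $\cF\isom\cL^{\otimes 3}\otimes\cG$ with $\cG:=f_*\bigl(\cO_X(m(K_X-f^*\cL))\otimes P\bigr)$; when $Q$ is trivial one has $\cL=\omega_Y$ and $\cG=f_*(\omega_{X/Y}^{m}\otimes P)$. After replacing $f$ by a birational model as in diagram \eqref{eq:modification} and shrinking $Y$ to the dense open set over which $f$ is smooth and $R^\bullet f_*(\omega_X^m\otimes P)$ is locally free, the base–change map $\cF\otimes k(y)\to H^0\bigl(X_y,(\omega_X^m\otimes P)|_{X_y}\bigr)$ is an isomorphism for $y$ in this open set, and since $f^*\cL$ is trivial on $X_y$ the target is $H^0(X_y,\omega_{X_y}^m\otimes P|_{X_y})$. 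Hence the restriction map in the statement is surjective for general $y$ as soon as $\cF$ is globally generated at the general point of $Y$, and this is what remains to be proved.

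To establish the generic global generation of $\cF$ I would first treat the case $Q=0$, where $\cL=\omega_Y$, $j=0$ and $\cG=f_*(\omega_{X/Y}^{m}\otimes P)$. Here $\cG$ is weakly positive in the sense of Viehweg, and its push–forward $a_{Y*}\cG$ satisfies generic vanishing on $A_Y$ by the generic–vanishing theory for direct images of pluricanonical sheaves (the fixed twist $P$ only translates the cohomological support loci), so $a_{Y*}\cG$ is a $GV$–sheaf. Combining this with the almost $M$–regularity of $a_{Y*}\cL$ from Lemma \ref{lem:existencebundle}, the Pareschi–Popa calculus of continuous global generation applies: tensoring the $GV$–sheaf $a_{Y*}\cG$ by $a_{Y*}\cL$ produces an $M$–regular, hence (by \cite{pp1}) continuously globally generated, sheaf, and tensoring a continuously globally generated sheaf by the two remaining copies of the (essentially $M$–regular) bundle $\cL$ yields a sheaf globally generated at its general point. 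Pulling back along the generically finite morphism $a_Y$ we conclude that $\cF=\cL^{\otimes 3}\otimes\cG$ is globally generated at the general point of $Y$. Equivalently, one may organize the computation around the effective divisor $E$ with $\cO_X(K_X+jQ-f^*\cL)\isom\cO_X(E)$ furnished by Lemma \ref{lem:existencebundle}: then $mK_X-(m-3)f^*\cL\sim K_X+(m-1)E+2f^*\cL-(m-1)jQ$, so $\cF\isom\cL^{\otimes 2}\otimes f_*\bigl(\omega_X\otimes\cO_X((m-1)E)\otimes P'\bigr)$ for some $P'\in\Pic^0(X)$, which is the form in which Koll\'ar's torsion–freeness and vanishing theorems are most convenient, $K_{X/Y}$ being effective by Lemma \ref{lem:K_X/Yeff}.

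In the remaining case $Q\in\Pic^0(X)\moins\PB$, rather than arguing directly on $Y$ I would run the previous paragraph $G$–equivariantly over the generically $G$–cover $\Pp\colon\PY\to Y$ constructed in the Claim inside the proof of Lemma \ref{lem:existencebundle}, using that $\Pp^*\cL_i\hookrightarrow\omega_{\PY}$, that $b_{\PY}$ exhibits $\PY$ as a variety of maximal Albanese dimension with $V^0(\omega_{\PY},b_{\PY})=\PB$, and that in this case $a_{Y*}\cL_i$ is honestly $M$–regular. One then descends the resulting generic global generation to the summand $\cL=\cL_i$ of $\Pp_*\omega_{\PY}$ on $Y$, and passes to the birational model of diagram \eqref{eq:modification} to keep $\cL$ locally free. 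Putting the two cases together gives the global generation of $\cF$ at the general point, hence the surjectivity.

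The main obstacle is this positivity step. Continuous global generation — which $M$–regularity hands over for free — is strictly weaker than global generation at a general point, so one genuinely needs all three $f^*\cL$–summands (this is the precise analogue of the classical $\omega_Y^{\otimes 3}$ in Chen–Hacon, see \cite{CH}), and one must push the almost $M$–regularity of $a_{Y*}\cL$ through the Pareschi–Popa machinery with careful bookkeeping along the chain ``almost $M$–regular $\to$ $M$–regular $\to$ continuously globally generated $\to$ generically globally generated''. A second, more technical, difficulty is that $P$ is an arbitrary element of $\Pic^0(X)$, not pulled back from $Y$ and not necessarily trivial on $X_y$, so one must make sure the generic–vanishing input for $f_*(\omega_{X/Y}^{m}\otimes P)$, respectively for its $G$–cover analogue, is genuinely available with this twist; this is where Koll\'ar–type torsion–freeness together with the generic vanishing theorems for pluricanonical direct images do the work.
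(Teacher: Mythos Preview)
Your reduction is correct: the surjectivity at a general $y$ follows once $\cF:=f_*\bigl(\cO_X(mK_X-(m-3)f^*\cL)\otimes P\bigr)$ is globally generated at the general point of $Y$. The gap is entirely in the positivity step, and it is a real one.

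First, the ``Pareschi--Popa calculus'' you invoke does not exist in the form you need. There is no theorem asserting that a $GV$-sheaf tensored with an (almost) $M$-regular sheaf becomes $M$-regular: already $\cO_A$ is $GV$, and tensoring by it changes nothing. Moreover $a_{Y*}(\cG\otimes\cL)$ is not $a_{Y*}\cG\otimes a_{Y*}\cL$, so it is unclear what object you are even applying the calculus to. The chain ``$GV$ $\to$ $M$-regular $\to$ cgg $\to$ generically globally generated'' via successive tensoring by $\cL$ simply does not go through from a $GV$ starting point; you need \emph{cgg} (equivalently, $M$-regularity or $IT^0$) as input, and for the pluricanonical pushforward $\cG=f_*(\omega_{X/Y}^m\otimes P)$ no such property is available without further work.

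Second, even the weaker claim that $a_{Y*}\cG$ is $GV$ is unjustified. Generic vanishing for higher direct images of the \emph{canonical} sheaf is classical, but for $f_*\omega_{X/Y}^m$ with $m\geq 2$ one needs an argument, and the standard device is precisely what you are missing: asymptotic multiplier ideals. The paper does not attempt to show $\cG$ is $GV$ at all. Instead it uses a different decomposition
\[
mK_X-(m-3)f^*\cL=\bigl(mK_X-(m-2)f^*\cL\bigr)+f^*\cL
\]
and inserts the ideal $\cI=\cJ(\|(m-1)K_X-(m-2)f^*\cL\|)$ (in Case A this is $\cJ(\|K_X+(m-2)K_{X/Y}\|)$). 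The point is that, since the Iitaka model of $(X,(m-1)K_X-(m-2)f^*\cL)$ dominates $Y$ (Lemma \ref{lem:dominates}, or its $G$-equivariant version in Case B), the results of \cite{j1} give that $a_{Y*}f_*\bigl(\cO_X(mK_X-(m-2)f^*\cL)\otimes\cI\bigr)$ is $IT^0$, hence cgg, and of full rank $P_m(X_y)$. Then a \emph{single} twist by $\cL$ --- using only $V^0(\cL,a_Y)=\PB$ --- upgrades cgg to generic global generation, and the inclusion into $f_*\cO_X(mK_X-(m-3)f^*\cL)$ is generically an isomorphism by the rank count. So only one of your three copies of $\cL$ is actually needed, but the price is the multiplier ideal, which does the heavy lifting you tried to delegate to a nonexistent tensor formalism.
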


\begin{proof}
We just prove the statement for $P=\cO_X$, the same argument works for any $P\in V^0(\omega_X^m, a_X)$.

There are two distinguished cases, whether $Q$ is trivial or not, which we address with slightly different techniques.

\textbf{Case A}.
Assume that $Q$ is trivial.

We have seen in the proof of Lemma \ref{lem:existencebundle} that $\chi(Y,\omega_Y)>0$, so $Y$ is of general type.
By Lemma \ref{lem:dominates}, the Iitaka model of $(X,K_X+(m-2)K_{X/Y})$ dominates $Y$ and by \cite[Lem.~2.1]{j1} there exists an asymptotic multiplier ideal sheaf $\cI:=\cJ(||K_X+(m-2)K_{X/Y}||)$ on $X$ such that $a_{Y*}f_*(\cO_X(2K_X+(m-2)K_{X/Y})\otimes\cI)$ is an $IT^0$ sheaf.
Hence, by \cite[Prop.~2.13]{pp1},
\begin{equation*}\cF:=f_*(\cO_X(2K_X+(m-2)K_{X/Y})\otimes\cI)
\end{equation*}
is {\em cgg} outside the exceptional locus of $a_Y$.

We conclude, similarly to \cite[Prop.~4.4 and Cor.~4.11]{BLNP} that, $\cF\otimes \omega_Y$ 
is globally generated in an open dense subset of $Y$.
Indeed, we first notice that $|\omega_Y\otimes P|$ is not empty for all $P\in\PB$.
Take $P_i$, $1\leq i\leq N$ such that the evaluation map
\begin{equation*}\oplus_{i=1}^NH^0(Y, \cF\otimes P_i)\otimes P_i^{-1}\rightarrow \cF\end{equation*}
is surjective.
Then
\begin{equation*}\oplus_{i=1}^NH^0(Y, \cF\otimes P_i)\otimes H^0(Y, \omega_Y\otimes P_i^{-1})\otimes \cO_Y\rightarrow \cF\otimes \omega_Y\end{equation*}
is surjective over $Y\moins \cup_{1\leq i\leq N} \Bs(|\omega_Y\otimes P_i|)$.
Finally this evaluation map factors through $H^0(Y, \cF\otimes \omega_Y)\otimes\cO_Y\rightarrow \cF\otimes\omega_Y$.

Moreover, by \cite[Lem.~3.5]{j1}, $\cF$ is a non-zero sheaf on $Y$ of rank $P_m(X_y)$.
Hence, over a general point $y\in Y$, $\cF\otimes k(y)$ is isomorphic to $H^0(X_y, \cO_{X_y}(mK_{X_y}))$.
Since
\begin{equation*}
\cF\otimes \omega_Y
\subset f_*(\cO_X(mK_X-(m-3)f^*K_{Y}))
\end{equation*}
and they have the same rank $P_m(X_y)$, we conclude the proof of the lemma when $Q$ is trivial.

\textbf{Case B}.
If $Q$ is non-trivial,
we use the same notation as in the proof of Lemma \ref{lem:existencebundle}.
After the modification performed in diagram \eqref{eq:modification}, we can assume that we have a birational model of $f:X\to Y$ such that there exists a diagram
\begin{equation*}
\xymatrix{
\wX\ar[r]^{\pi}\ar[d]^{\wf}\ar@/^2pc/[rr]_{b_{\wX}} & X\ar[r]^{a_X}\ar[d]^{f} & A_X\ar[d]^{\pr}\\
\PY\ar[r]^{\Pp}\ar@/_2pc/[rr]^{b_{\PY}} & Y \ar[r]^{a_Y} & B
}
\end{equation*}
where all varieties are smooth, the vertical morphisms are fibrations, and $\pi$ is birational to an \'etale cover.
Moreover, if $\Pp_*\omega_{\PY}=\oplus_i\cL_i$, then the line bundle $\cL$ constructed in Lemma \ref{lem:existencebundle} is a birational modification of  $\cL_i$ for $i\neq 0$.

We claim that the Iitaka model of $(X,(m-1)K_{X}-(m-2)f^*\cL)$ dominates $Y$.
Indeed, by Lemma \ref{lem:dominates}, the Iitaka model of $(\wX, (m-1)K_{\wX/\PY}+\wf^*K_{\PY})$ dominates $\PY$.
By definition of $\cL_i$ (see \eqref{eq:defLi}), we have $\Pp^*\cL_i\preceq K_{\PY}$, which implies that $\pi^*f^*\cL\preceq f^*K_{\PY}$ (recall \eqref{eq:defLi2} and $\cL=\cL'_i$ for some $i\neq 0$).
This implies that $(m-1)K_{\wX/\PY}+\wf^*K_{\PY}\preceq \pi^*((m-1)K_{X}-(m-2)f^*\cL) +E$, where $E$ is some $\pi$-exceptional divisor.
Since $\pi$ is birational to an \'etale cover, the claim is clear.

Then, by \cite[Lem.~2.1]{j1} there is an ideal $\cI$ of $X$ such that
\begin{equation*}
a_{Y*}(f_*\cO_X(mK_{X})\otimes \cI\otimes {\cL}^{-(m-2)})
\end{equation*}
is an $IT^0$ sheaf and by \cite[Proof of Lem.~3.9]{j1} $f_*\cO_{X}(mK_{X})\otimes \cI\otimes {\cL}^{-(m-2)}$ has rank $P_m(X_{y})$.
We conclude as before, that for $i\neq 0$, $f_*(\cO_{X}(mK_{X}-(m-3)f^*\cL))$ is globally generated over an open dense subset of $Y$.
\end{proof}


\begin{rema}\label{rem:compatible}
Observe that we can consider a birational model $f:X\to Y$ such that both $\cL$ and $\cH$ are line bundles in $Y$ that fulfil the desired properties listed in lemmas \ref{lem:existencebundle}, \ref{lem:2isbirational} and \ref{lem:completeseries}.

In particular, when in Setting \ref{set:basic} $Q$ is non-trivial, we also have the following diagram:
\begin{equation*}
\xymatrix{
\wX\ar[r]^{\pi}\ar[d]^{\wf}\ar@/^2pc/[rr]_{b_{\wX}} & X\ar[r]^{a_X}\ar[d]^{f} & A_X\ar[d]^{\pr}\\
\PY\ar[r]^{\Pp}\ar@/_2pc/[rr]^{b_{\PY}} & Y \ar[r]^{a_Y} & B
}
\end{equation*}
where all varieties are smooth, the vertical morphisms are fibrations, and $\pi$ is birational to an \'etale cover.
Moreover, $\omega_{\PY}\otimes \Pp^*\cL^{-1}$ and $\omega_{\PY}^2\otimes \Pp^*\cH^{-1}$ have non-trivial sections.
\end{rema}

\section{General type case}
In this section $X$ will be a variety of general type.

\begin{theo}\label{thm:trimap}Let $X$ be a smooth projective variety, of maximal Albanese dimension and general type.
Then, the linear system $|3K_X+P|$ induces a birational map, for any $P\in\Pic^0(X)$.
\end{theo}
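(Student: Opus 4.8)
The plan is to argue by induction on $n=\dim X$ and, for $n\geq 2$, to reduce the birationality of $|3K_X+P|$ to the same statement on the fibres of a natural fibration built from the cohomological support loci, the reduction being carried out by Lemma~\ref{lem:1}. If $n=1$ then $X$ is a curve of genus $g\geq 2$, $\deg(3K_X+P)=6g-6\geq 2g+1$, so $3K_X+P$ is very ample; this is the base case. When $\chi(X,\omega_X)>0$ the theorem (including the twist by $P$) is Chen--Hacon \cite[Thm.~5.4]{CH}, so I would assume $\chi(X,\omega_X)=0$. By the structure theory of cohomological support loci this forces some $V^j(\omega_X,a_X)$ with $0<j<n$ to have a component of codimension $j$, i.e.\ $\cS_X\neq\emptyset$, so we are in Setting~\ref{set:basic}, with a fibration $f\colon X\to Y$ whose general fibre $X_y$ has dimension $k\in\{1,\dots,n-1\}$.

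The key observation is that $X_y$ is again of maximal Albanese dimension and of general type: since $X$ is of general type, Iitaka's easy addition gives $n=\kappa(X)\leq\kappa(X_y)+\dim Y=\kappa(X_y)+(n-k)$, so $\kappa(X_y)\geq k=\dim X_y$. As $\dim X_y<n$, the induction hypothesis applies to $X_y$ and yields that $|3K_{X_y}+P'|$ induces a birational map for every $P'\in\Pic^0(X_y)$. Next I would bring in the three results of Section~3, on a common birational model of $f$ (Remark~\ref{rem:compatible}): Lemma~\ref{lem:existencebundle} gives $\cL$ on $Y$ with $a_{Y*}\cL$ almost $M$-regular, $V^0(\cL,a_Y)=\PB$, and $\cO_X(K_X+jQ)\otimes f^*\cL^{-1}$ effective; Lemma~\ref{lem:2isbirational} gives $\cH$ on $Y$ with $\cO_X(2K_X+iQ)\otimes f^*\cH^{-1}$ effective, together with $P_1,\dots,P_M\in\Pic^0(Y)$ making $\varphi_Y\colon Y\to\prod_{l=1}^M\mathbf{P}(H^0(Y,\cH\otimes P_l)^*)$ birational; and Lemma~\ref{lem:completeseries} with $m=3$ (so the $f^*\cL$ term disappears) shows that $H^0(X,\cO_X(3K_X)\otimes P)\to H^0(X_y,\cO_{X_y}(3K_{X_y})\otimes P)$ is surjective for general $y$ and every $P\in V^0(\omega_X^3,a_X)$, which for $X$ of general type is all of $\Pic^0(X)$.

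With these in hand I would apply Lemma~\ref{lem:1} to $X\xrightarrow{\id}X\xrightarrow{f}Y$ with $L=\cO_X(3K_X)\otimes P$, so that its conclusion is precisely the birationality of $|3K_X+P|$. Condition~1) holds because, by the surjectivity above, the image of $H^0(X,L)$ in $H^0(X_y,L|_{X_y})$ is the full space $H^0(X_y,\cO_{X_y}(3K_{X_y})\otimes P|_{X_y})$, whose linear system is birational on $X_y$ by the induction hypothesis. For condition~2) I would take $H_l=\cH\otimes P_l$ (or a birational model thereof), so that $\varphi_Y$ is birational by Lemma~\ref{lem:2isbirational}; what then remains is the effectivity of $L-f^*H_l=\cO_X(3K_X)\otimes P\otimes f^*(\cH\otimes P_l)^{-1}$. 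Writing $3K_X=K_X+2K_X$ and using the effective divisor $E$ with $\cO_X(E)\cong\cO_X(2K_X+iQ)\otimes f^*\cH^{-1}$, this reduces to a non-vanishing statement of the shape $H^0\big(X,\omega_X\otimes\cO_X(E)\otimes M\big)\neq 0$ for the relevant $M\in\Pic^0(X)$; here $\cL$ enters (through $V^0(\cL,a_Y)=\PB$ and $\cO_X(K_X+jQ)\otimes f^*\cL^{-1}$ effective), and when $Q\neq 0$ one passes to the étale cover $\pi\colon\wX\to X$ of diagram~\eqref{eq:diag-existence} on which $Q$ trivialises, proves the non-vanishing there, and descends. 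Granting it, Lemma~\ref{lem:1} gives the theorem.

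The main obstacle is exactly this non-vanishing in condition~2): it is the only place the general-type hypothesis is used in an essential way — the introduction notes that without it one is forced up from $3K_X$ to $4K_X$ — and the delicate part is making it hold for \emph{every} $P\in\Pic^0(X)$, which requires keeping careful track of the $\Pic^0$-directions and of the torsion class $Q$ while combining the positivity of $\cH$ and $\cL$ on $Y$ with the bigness of $\omega_X$ on $X$. A secondary point that needs justification is the very first reduction, namely that $\chi(\omega_X)=0$, for $X$ of general type and maximal Albanese dimension, indeed forces $\cS_X\neq\emptyset$ (equivalently, that $a_{X*}\omega_X$ cannot be almost $M$-regular in that case).
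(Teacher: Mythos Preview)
Your overall strategy---induction on $\dim X$, reduction via Chen--Hacon when $\chi(X,\omega_X)>0$, Setting~\ref{set:basic} otherwise, and then Lemma~\ref{lem:1} applied to $X\xrightarrow{\id}X\xrightarrow{f}Y$ with condition~1) coming from Lemma~\ref{lem:completeseries} plus the inductive hypothesis and condition~2) from Lemma~\ref{lem:2isbirational}---is exactly the paper's. You also correctly isolate the crux of the argument as the non-vanishing of $H^0\bigl(X,\cO_X(3K_X+P)\otimes f^*(\cH\otimes P')^{-1}\bigr)$ for \emph{every} $P\in\Pic^0(X)$ and $P'\in\PB$ (the paper's Claim~\ddag). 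Your secondary worry, that $\chi(X,\omega_X)=0$ forces $\cS_X\neq\emptyset$, is handled in the paper by the reference to \cite[Prop.~4.10]{BLNP}.

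The genuine gap is in your proposed mechanism for that non-vanishing. Decomposing $3K_X$ and using the effective divisors in $|K_X+jQ-f^*\cL|$ and $|2K_X+iQ-f^*\cH|$ only gives the claim for the \emph{specific} twist $P=(i+j)Q$: the leftover factor $P-(i+j)Q$ is an arbitrary element of $\Pic^0(X)$, it need not lie in $f^*\PB$, and therefore cannot be absorbed by $V^0(\cL,a_Y)=\PB$. Passing to the \'etale cover $\wX$ kills $Q$ but does nothing to $P$, so that does not resolve the difficulty either. The paper closes this gap by a deformation-invariance argument rather than a direct effectivity one: it introduces the multiplier ideal $\cJ=\cJ(\|2K_X-f^*\cH+\tfrac{1}{N}f^*H\|)$ and the sheaves $\cF_P=f_*\bigl(\cO_X(3K_X-f^*\cH)\otimes\cJ\otimes P\bigr)$, proves via Nadel vanishing (using that $2K_X-f^*\cH+\tfrac{1}{N}f^*H$ is big---this is where the \'etale cover and the general-type hypothesis are actually used) that $R^if_*(\cdots)=0$ for $i>0$, hence $\chi(Y,\cF_P)=\chi(X,\cdots)$ is \emph{independent of} $P\in\PA_X$. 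Combined with the fact that each $\cF_P$ is $GV$, this gives $h^0(Y,\cF_P\otimes P')=\chi(Y,\cF_P)=\chi(Y,\cF_{mQ})>0$ for general $P'$, the last inequality being precisely the effectivity computation at $P=mQ$ that your decomposition does supply (cf.~\eqref{eq:wIeff}). Semicontinuity then yields $V^0(\cF_P,a_Y)=\PB$ for all $P$. This Euler-characteristic-constancy device is the missing idea in your proposal.
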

\begin{proof}
We reason by induction on the dimension of $X$, that we will denote by $n$.
Note that for $n=1$ the result is well known.
So we assume that for any $P_Y\in\Pic^0(Y)$, $|3K_Y+P_Y|$ induces birational map for any smooth projective variety $Y$ of maximal Albanese dimension, general type, and $\dim Y\leq n-1$.

Observe that $\cS_X$ is empty if and only if $a_{X*}\omega_X$ is almost $M$-regular.
Since $X$ is of general type, if $\cS_X$ is empty, then $\chi(X,\omega_X)>0$ (e.g.~\cite[Prop.~4.10]{BLNP}).
We notice that Chen and Hacon have proved that $|3K_X+P|$ induces a birational map in this situation (see \cite[Thm.~5.4]{CH}).

From now on, we will assume $\cS_X$ is not empty.
As in the last section, we are in Setting \ref{set:basic}:
\begin{equation*}
\xymatrix{
X\ar[r]^{a_X}\ar[d]_{f} & A_X\ar[d]^{\pr}\\
Y \ar[r]^{a_Y} & B .
}
\end{equation*}
Consider an appropriate birational model of $f:X\to Y$ in the sense of Remark \ref{rem:compatible}.

Let $y\in Y$ be a general point and denote by $X_y$ a general fibre of $f$.
By Lemma \ref{lem:completeseries}, the restriction map
\begin{equation}\label{eq:complete} H^0(X, \cO_X(3K_X)\otimes P)\rightarrow H^0(X_y, \cO_{X_y}(3K_{X_y})\otimes P)
\end{equation}
is surjective for any $P\in\Pic^0(X)$ and, by the induction hypothesis,
\begin{equation*}|3K_{X_y}+ \res{P}{X_y}|
\end{equation*}
induces a birational map.

We have also produced interesting line bundles on $Y$ in Lemma \ref{lem:existencebundle} and Lemma \ref{lem:2isbirational}.
Let $\cH$ be the line bundle on $Y$ constructed in Lemma \ref{lem:2isbirational}.
According to Lemma \ref{lem:1}, in order to conclude the proof of the theorem, we just need to prove the following claim.\\

\textbf{Claim \ddag.} For every $P\in \PA_X$ and every $P'\in \PB$, the line bundle
\begin{equation*}
3K_X+P-f^*(\cH+ P')
\end{equation*}
has a non-trivial section.\\

Let be $\cJ:=\cJ(||2K_X-f^*\cH+\frac{1}{N}f^*H||)$, where $N$ is an integer large enough and $H$ is an ample divisor on $Y$.
For any $P\in\PA_X$, we define
\begin{equation*}
\cF_P:=f_*\big(\cO_X(3K_X-f^*\cH)\otimes \cJ\otimes P\big).
\end{equation*}
Observe that to conclude the proof of the claim it is enough to see that $V^0(\cF_P, a_Y)=\PB$.

For any ample divisor $H'$ on $Y$, we have that
\begin{equation}\label{eq:dag} \tag{\dag}
H^i(X, \cO_X(3K_X-f^*\cH)\otimes \cJ\otimes P\otimes f^*\cO_Y(H'))=0,
\end{equation}
for any $i>0$.
We postpone the proof of \eqref{eq:dag} to the end of the proof of this theorem.
From \eqref{eq:dag} we deduce that
\begin{equation*}
R^if_*\big(\cO_X(3K_X-f^*\cH)\otimes \cJ\otimes P)\big)=0,
\end{equation*}
for any $i>0$ (see e.g.~\cite[Lemma 4.3.10]{laz}).
Therefore,
\begin{equation*}
\chi(Y, \cF_P)=\chi(X, \cO_X(3K_X-f^*\cH)\otimes \cJ\otimes P)
\end{equation*}
is constant for $P\in\PA_X$.

By Lemma \ref{lem:existencebundle} and Lemma \ref{lem:2isbirational}, there exist integers $i$ and $j$ and effective divisors $D_1\in |K_X+iQ-f^*\cL|$ and $D_2\in |2K_X+jQ-f^*\cH|$.
Let $m=i+j$ and write $D=D_1+D_2\in |3K_X+mQ-f^*\cH-f^*\cL|$, i.e.
\begin{equation*}
H^0(X, \cO_X(3K_X+mQ-f^*\cH-f^*\cL))=H^0(X, \cO_X(D))\neq 0.
\end{equation*}

Since
\begin{align}\cJ&=\cJ(||2K_X-f^*\cH+\frac{1}{N}f^*H||)\notag\\
&\supset \cJ(||2K_X-f^*\cH||) &\textrm{$H$ is ample on $Y$}\notag\\
&= \cJ(||2K_X+jQ-f^*\cH||) &\textrm{$Q$ is torsion} \notag\\
&\supset \cO_X(-D_2) &\textrm{by \cite[Thm.~11.1.8]{laz}}, \notag
\end{align}
we have
\begin{align}H^0(Y, \cF_{mQ}\otimes\cL^{-1})&= H^0(X, \cO_X(3K_X+mQ-f^*\cH-f^*\cL)\otimes \cJ)\notag\\
&\supset H^0(X, \cO_X(D_1))\neq 0.\label{eq:wIeff}
\end{align}

Therefore, since $V^0(\cL,a_Y)=\PB$, we have $h^0(Y, \cF_{mQ}\otimes P')>0$ for all $P'\in\PB$.

On the other hand, we see by \cite[Lem.~2.5]{j1} that $\cF_P$ is a $GV$-sheaf for any $P\in\PA_X$.
Therefore, for $P'\in\PB$ general and any $P\in\PA_X$,
\begin{equation*}h^0(Y, \cF_{P}\otimes P')=\chi(Y, \cF_P)=\chi(Y, \cF_{mQ})=h^0(Y, \cF_{mQ}\otimes P')>0.
\end{equation*}
Hence, by semicontinuity, for any $P\in\PA_X$, $V^0(\cF_P, a_Y)=\PB$.

\begin{proof}[Proof of \eqref{eq:dag}]
We use the same notation as in the proof of Lemma \ref{lem:existencebundle} and Lemma \ref{lem:2isbirational}, which is summarized in Remark \ref{rem:compatible}.

Notice that $2K_{\wX/\PY}\preceq \pi^*(2K_X-f^*\cH)+E$, where $E$ is some $\pi$-exceptional divisor.
Since $K_{\wX/\PY}+\frac{1}{N}\wf^*\Pp^*H$ is a big $\mathbf{Q}$-divisor on $\wX$, then $2K_X-f^*\cH+\frac{1}{N}f^*H$ is a big $\mathbf{Q}$-divisor on $X$.
So \eqref{eq:dag} is a consequence of Nadel vanishing theorem (see \cite[Thm.~11.2.12]{laz}).
\end{proof}
 \renewcommand{\qedsymbol}{}
\end{proof}

\section{Iitaka fibration}
In this section $X$ will not necessarily be a variety of general type.

\begin{theo}\label{thm:tetraIitaka}Let $X$ be a smooth projective variety, of maximal Albanese dimension.
Then, the linear system $|4K_X+P|$ induces a model of the Iitaka fibration of $X$, for any $P\in V^0(\omega_X^2, a_X)$.
\end{theo}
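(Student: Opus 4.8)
The plan is to prove Theorem \ref{thm:tetraIitaka} by induction on $n=\dim X$, following the same overall architecture as the proof of Theorem \ref{thm:trimap} but with $3K_X$ replaced by $4K_X$, and exploiting the fact that $4 = 1 + 3$ so that a section of $K_X$ (or $K_X+jQ-f^*\cL$) can be factored out. First I would reduce to the relevant case. If $X$ has Kodaira dimension $0$ there is nothing to prove, and if $a_{X*}\omega_X$ is almost $M$-regular (i.e. $\cS_X=\emptyset$) one can invoke Jiang's result that $|5K_X|$ — in fact here $|4K_X+P|$ for $P\in V^0(\omega_X^2,a_X)$, via the arguments of \cite{j2} combined with Chen--Hacon when $\chi>0$ — already induces the Iitaka fibration; so as in Theorem \ref{thm:trimap} we may assume $\cS_X\neq\emptyset$ and place ourselves in Setting \ref{set:basic}, choosing a birational model of $f: X\to Y$ compatible with Remark \ref{rem:compatible}, with line bundles $\cL$ and $\cH$ on $Y$ as in Lemmas \ref{lem:existencebundle} and \ref{lem:2isbirational}.

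Next I would set up the induction step via Lemma \ref{lem:1} applied to $f: X\xrightarrow{g} Z\xrightarrow{h} Y$, where $Z$ is (a model of) the Iitaka fibration of $X$: note $g$ factors through $f$ because a general fibre $X_y$ of $f$ is of general type of dimension $k$ by Lemma \ref{lem:existencebundle} and the construction of $B$, so $Z\to Y$ makes sense. The line bundle playing the role of $L$ is $\cO_X(4K_X+P)$. For condition 1) of Lemma \ref{lem:1}, I would use Lemma \ref{lem:completeseries} with $m=4$ to get surjectivity of $H^0(X,\cO_X(4K_X-f^*\cL)\otimes P)\to H^0(X_y,\cO_{X_y}(4K_{X_y})\otimes P)$, and then the inductive hypothesis (Theorem \ref{thm:tetraIitaka} in dimension $\le n-1$, together with Theorem \ref{thm:trimap} for the general-type fibres $X_y$) to conclude that $|4K_{X_y}+P|_{X_y}|$ induces the Iitaka fibration of $X_y$, which is $\res{g}{X_y}$. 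For condition 2), take the $H_i$ to be the twists $\cH\otimes P_i$ with $P_i\in\Pic^0(Y)$ furnished by Lemma \ref{lem:2isbirational}, so that $\varphi_Y$ is birational; one must still check $4K_X+P-f^*(\cH+P_i)$ is effective, which is where the nonvanishing claim enters.

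The main obstacle — and the reason $4$ rather than $3$ is needed in the non-general-type case — is proving the analogue of Claim \ddag: for every $P\in\PA_X$ and every $P'\in\PB$, the line bundle $4K_X+P-f^*(\cH+P')$ has a non-trivial section. I would imitate the end of the proof of Theorem \ref{thm:trimap}: set $\cJ:=\cJ(||2K_X-f^*\cH+\tfrac1N f^*H||)$ for $H$ ample on $Y$ and $N\gg0$, and $\cF_P:=f_*\big(\cO_X(4K_X-f^*\cH)\otimes\cJ\otimes P\big)$; it suffices to show $V^0(\cF_P,a_Y)=\PB$. As in the cited proof, the vanishing $(\dag)$-type statement (Nadel vanishing, since $2K_X-f^*\cH+\tfrac1N f^*H$ is big — this follows from $2K_{\wX/\PY}\preceq \pi^*(2K_X-f^*\cH)+E$ and bigness of $K_{\wX/\PY}+\tfrac1N\wf^*\Pp^*H$) shows $R^if_*(\cO_X(4K_X-f^*\cH)\otimes\cJ\otimes P)=0$ for $i>0$, hence $\chi(Y,\cF_P)$ is independent of $P$, and $\cF_P$ is a $GV$-sheaf by \cite[Lem.~2.5]{j1}. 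Now the crucial effectivity input is different: using the divisors $D_1\in|K_X+iQ-f^*\cL|$ and $D_2\in|2K_X+jQ-f^*\cH|$ from Lemmas \ref{lem:existencebundle} and \ref{lem:2isbirational}, together with an \emph{extra} effective divisor in $|K_X+\ell Q|$ coming from the nonvanishing $V^0(\omega_X,a_X)\supseteq$ a torsion point (equivalently, from $\kappa(X)\ge0$ and the structure of $a_{X*}\omega_X$; this is automatic since $P_1(X)>0$) — this is the "fourth" canonical class that has no counterpart in the general-type argument and whose absence there forces the use of $4K_X$ — one gets $D=D_1+D_2+D_3\in|4K_X+m'Q-f^*\cH-f^*\cL|$, so that $H^0(Y,\cF_{m'Q}\otimes\cL^{-1})\supseteq H^0(X,\cO_X(D_1+D_3))\neq0$. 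Since $V^0(\cL,a_Y)=\PB$, this gives $h^0(Y,\cF_{m'Q}\otimes P')>0$ for all $P'\in\PB$, and then $GV$-ness plus constancy of $\chi$ plus semicontinuity yields $h^0(Y,\cF_P\otimes P')=\chi(Y,\cF_P)=\chi(Y,\cF_{m'Q})=h^0(Y,\cF_{m'Q}\otimes P')>0$ for general $P'$, hence $V^0(\cF_P,a_Y)=\PB$ for every $P$. This establishes the claim, and Lemma \ref{lem:1} completes the induction. I would finally add the remark (as foreshadowed in Remark \ref{rem:after}) explaining that in the general-type case the divisor $D_3\in|K_X+\ell Q-f^*\cL|$ — rather than merely $|K_X+\ell Q|$ — is available because $Y$ is of general type there, which is exactly why $3K_X$ suffices in Theorem \ref{thm:trimap} but not here.
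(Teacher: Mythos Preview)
There is a genuine gap. Your argument copies the Nadel-vanishing step $(\dag)$ from the proof of Theorem \ref{thm:trimap} verbatim: you assert that $2K_X-f^*\cH+\tfrac1N f^*H$ is big because $K_{\wX/\PY}+\tfrac1N\wf^*\Pp^*H$ is big. But this bigness relies on the general fibre of $\wf$ (equivalently of $f$) being of general type, and in the present setting it is \emph{not}. Indeed, once you combine Settings \ref{set:basic} and \ref{set:bas2} one checks (as the paper does, via Easy Addition) that $\res{g}{X_y}:X_y\to Z_y$ is the Iitaka fibration of $X_y$, with $\dim Z_y=k-l<k=\dim X_y$ as soon as $l>0$; in particular your parenthetical ``a general fibre $X_y$ of $f$ is of general type'' is false, and so is the justification you give for the factorization $Z\to Y$. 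With $K_{\wX/\PY}$ not big on fibres, adding a small pullback from $Y$ cannot make it big on $\wX$, so $(\dag)$ fails, you lose $R^if_*(\cdots)=0$, and the constancy of $\chi(Y,\cF_P)$ over $P\in\PA_X$ --- the hinge of your nonvanishing argument --- is unproved.

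The paper's fix is not a cosmetic change but a structural one: it pushes forward through the Iitaka fibration $g:X\to Z$ first. Concretely, it takes $\cJ:=\cJ(||3K_X-f^*\cH+\tfrac1N f^*H||)$ and $\cG_P:=g_*\big(\cO_X(4K_X+P-f^*\cH)\otimes\cJ\big)$ on $Z$, and obtains the needed vanishing $H^i(Z,\cG_P\otimes Q''\otimes h^*H')=0$ from \cite[Lem.~2.1]{j1}; this gives $R^ih_*(\cG_P\otimes Q'')=0$ and hence constancy of $\chi$ over $Q''\in\PA_Z$, \emph{not} over $\PA_X$. The ``extra'' canonical class is then found in a $P$-dependent way: since $P\in\mathscr{K}$, the sheaf $a_{Z*}g_*\cO_X(K_X+P-mQ)$ is a nontrivial $GV$-sheaf on $A_Z$, so there exists $Q_0\in\pr_Z^*\PA_Z$ with $K_X+P-mQ+Q_0$ effective; together with $3K_X+mQ-f^*\cH-f^*\cL$ this produces a section of $4K_X+P+Q_0-f^*\cH-f^*\cL$, and the $\PA_Z$-constancy lets one absorb $Q_0$. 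Your ``$D_3\in|K_X+\ell Q|$'' idea only lands you at $\cF_{m'Q}$ and then relies on the unavailable $\PA_X$-constancy to reach an arbitrary $P\in\mathscr{K}$; the paper's twist by $P-mQ+Q_0$ and the intermediate passage through $Z$ are exactly what replace that missing step.
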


Before starting the proof the Theorem \ref{thm:tetraIitaka}, which is parallel to the proof of Theorem \ref{thm:trimap}, let us fix the notation.

\begin{sett}\label{set:bas2} Consider the following diagram:
\begin{eqnarray*}
\xymatrix{
X\ar[r]^{a_X}\ar[d]^g& A_X\ar[d]^{\pr_Z}\\
Z\ar[r]^{a_Z}& A_Z}
\end{eqnarray*}
where $g: X\rightarrow Z$ is a model of the Iitaka fibration of $X$ such that $Z$ is smooth.
Let $K$ be the kernel of $\pr_Z$.
We denote by $X_z$ a general fibre of $g$, which is birational to its Albanese variety $\wK$, and the natural map $\wK\rightarrow K$ is an isogeny.
We know that $\pr_Z^*\PA_Z$ is an irreducible component of
\begin{equation*}\mathscr{K}:=\ker(\PA_X\rightarrow \Pic^0(X_z))\end{equation*}
and denote by $\cQ:=\mathscr{K}/\pr_Z^*\PA_Z$.
Observe that $\mathscr{Q}$ can be also identified with $\ker(\PK\rightarrow \widehat{\wK})$.
\end{sett}

\begin{rema}\label{rem:before}
The group $\cQ$ is often non-trivial and this is exactly the reason why the tricanonical map can not always induce the Iitaka fibration.
In some specific cases, given information about $\cQ$, we can prove that the tricanonical map or some twisted tricanonical map (the maps induced by $|3K_X+P|$ for some $P\in\Pic^0(X)$) will induce the Iitaka fibration (see Remark \ref{rem:after}).

Nevertheless we will construct a variety of maximal Albanese dimension (see Example \ref{ex:not3}), where NONE of the twisted tricanonical maps is birationally equivalent to the Iitaka fibration.
\end{rema}

Before proving the theorem, we start with an easy well-known observation.
We add its proof for the convenience of reader.
\begin{lemm}\label{lem:K}
The kernel $\mathscr{K}$ defined in Setting \ref{set:bas2}, satisfies
\begin{equation*}
\mathscr{K}=V^0(\omega_X^m,a_X) \qquad \text{ for all }m\geq 2.
\end{equation*}
\end{lemm}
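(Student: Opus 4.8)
The plan is to prove the two inclusions $V^0(\omega_X^m,a_X)\subseteq\mathscr K$ and $\mathscr K\subseteq V^0(\omega_X^m,a_X)$ separately: the first one for all $m\ge 1$ (it costs nothing extra), the second one first for $m=2$. These suffice, because multiplying a section by a fixed nonzero element of $H^0(X,\omega_X^{m-2})$ — which exists since $X$ is of maximal Albanese dimension, so $P_{m-2}(X)\ge 1$ — gives $V^0(\omega_X^2,a_X)\subseteq V^0(\omega_X^m,a_X)$, and then
\[V^0(\omega_X^m,a_X)\subseteq\mathscr K\subseteq V^0(\omega_X^2,a_X)\subseteq V^0(\omega_X^m,a_X)\]
forces equality throughout.

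For $V^0(\omega_X^m,a_X)\subseteq\mathscr K$, take $P$ in the left-hand side and a nonzero $s\in H^0(X,\omega_X^m\otimes a_X^*P)$. Restricting $s$ to a general fibre $X_z$ of $g$ (using $\omega_X|_{X_z}\cong\omega_{X_z}$), it cannot vanish for all general $z$, since its zero locus is a proper closed subset of $X$; hence $H^0\big(X_z,\omega_{X_z}^m\otimes(a_X^*P)|_{X_z}\big)\ne 0$ for general $z$. As $X_z$ is birational to the abelian variety $\wK=\Alb(X_z)$, the relative canonical bundle of $X_z\to\wK$ is effective (Lemma \ref{lem:K_X/Yeff}), and pushing forward to $\wK$ (on which $\omega_{\wK}\cong\cO_{\wK}$) shows $(a_X^*P)|_{X_z}$ must be trivial, i.e.\ $P\in\ker(\PA_X\to\Pic^0(X_z))=\mathscr K$.

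For $\mathscr K\subseteq V^0(\omega_X^2,a_X)$, first note that $Z$ is of general type: it is of maximal Albanese dimension (the map $X\to A_Z$ factors through $Z$ and is generically finite onto its image, of dimension $\dim Z$) and $\kappa(Z)=\kappa(X)=\dim Z$ because $g$ models the Iitaka fibration; in particular $\omega_Z$ is big. Fix $P\in\mathscr K$. Since $\cQ=\mathscr K/\pr_Z^*\PA_Z$ is finite and $\PA_Z$ is divisible, we may write $a_X^*P=g^*a_Z^*R_Z\otimes a_X^*T$ with $R_Z\in\PA_Z$ and $T\in\mathscr K$ a torsion class. As $T\in\mathscr K$, the bundle $(a_X^*T)|_{X_z}$ is trivial, so by the projection formula
\[\cF:=g_*(\omega_X^2\otimes a_X^*T)=g_*(\omega_{X/Z}^{\,2}\otimes a_X^*T)\otimes\omega_Z^2\]
is a torsion-free sheaf of rank $P_2(X_z)=1$ on $Z$ with $H^0(X,\omega_X^2\otimes a_X^*P)=H^0(Z,\cF\otimes a_Z^*R_Z)$; so it is enough to show $V^0(\cF,a_Z)=\Pic^0(Z)$. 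By Viehweg's weak positivity theorem, applied on the \'etale cover of $Z$ trivialising $a_X^*T$ and then descended to the relevant rank-one summand, $g_*(\omega_{X/Z}^{\,2}\otimes a_X^*T)$ is weakly positive, hence pseudoeffective; thus $\cF$ is, up to the torsion twist $a_X^*T$, the product of a pseudoeffective rank-one sheaf with the big line bundle $\omega_Z^2$. Feeding this into a Nadel vanishing argument on $Z$, exactly as in the proof of \eqref{eq:dag}, yields $H^i\big(Z,\cF\otimes a_Z^*R_Z\otimes a_Z^*H'\big)=0$ for $i>0$ and $H'$ ample, so $\chi(Z,\cF\otimes Q')$ is independent of $Q'\in\Pic^0(Z)$ and equals $h^0(Z,\cF\otimes Q')$ for general $Q'$, which is positive because $\omega_Z^2$ is big; since $\cF$ is a $GV$-sheaf on $Z$, semicontinuity upgrades this to $V^0(\cF,a_Z)=\Pic^0(Z)$.

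The step with real content is the last one: upgrading ``the direct-image sheaf on $Z$ is big'' to ``every $\Pic^0(Z)$-twist of it has a nonzero section''. Bigness of the reflexive hull alone does not give this; one genuinely needs the positivity of $g_*\omega_{X/Z}^{\otimes2}$ (so the direct image is pseudoeffective, not merely of positive degree) together with the bigness of $\omega_Z$, fed into the Nadel/Kawamata--Viehweg vanishing argument of \eqref{eq:dag} on the base $Z$, to force the zeroth cohomological support locus to be all of $\Pic^0(Z)$. The secondary technical point is to handle the torsion factor $T$ correctly, i.e.\ to deduce weak positivity of $g_*(\omega_{X/Z}^{\,2}\otimes a_X^*T)$ from the untwisted statement on the corresponding \'etale cover.
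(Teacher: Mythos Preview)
The paper's proof is a two-line application of \cite[Lem.~2.1]{j1}: for $P\in\mathscr K$ the sheaf $a_{Z*}g_*\big(\omega_X^m\otimes\cJ(\|\omega_X^{m-1}\|)\otimes P\big)$ is $IT^0$ on $A_Z$ (the lemma applies because $g$ is the Iitaka fibration), and it is nonzero because the multiplier ideal is cosupported on a proper closed subset of $X$, so the pushforward has the same generic rank $P_m(X_z)\ge 1$ as $g_*(\omega_X^m\otimes P)$. A nonzero $IT^0$ sheaf has $h^0>0$ for every twist, and this $h^0$ is bounded above by $h^0(X,\omega_X^m\otimes P)$. No decomposition of $P$, no weak positivity, no reduction to $m=2$ is needed.

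Your first inclusion and your reduction to $m=2$ are fine. The gap is in the last paragraph. You assert that $h^0(Z,\cF\otimes Q')$ is ``positive because $\omega_Z^2$ is big'', but bigness does not give $h^0>0$ for a fixed line bundle (a degree-one line bundle on a curve of genus $\ge 2$ is big yet generically has no sections); tensoring a pseudoeffective class with a big one only yields a big class, which says nothing about $h^0$ of that specific bundle. The Nadel argument of \eqref{eq:dag} does not transplant ``to $Z$'' as you claim: there, Nadel is applied on $X$ to a line bundle of the form $K_X+(\text{big})$, and the conclusion is used to kill $R^if_*$; here you are already on $Z$, $\cF$ is only torsion-free of rank one, and you have produced no multiplier ideal on $Z$ making $\cF\otimes Q'$ into $K_Z+(\text{nef and big})$. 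You also assert that $\cF=g_*(\omega_X^2\otimes a_X^*T)$ is $GV$ without justification; generic vanishing gives this for $g_*\omega_X$, not for higher powers. What actually rescues the argument is exactly what \cite[Lem.~2.1]{j1} packages: replace $\cF$ by $g_*\big(\omega_X^2\otimes\cJ(\|\omega_X\|)\otimes P\big)$ and run Nadel on $X$ relative to $g$, obtaining $IT^0$ directly for every $P\in\mathscr K$; your decomposition $P=\pr_Z^*R_Z+T$ and the weak-positivity descent then become unnecessary.
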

\begin{proof} It is clear that $V^0(\omega_X^m,a_X)\subseteq \mathscr{K}$.
If $P \in \mathscr{K}$, then $g_*(\omega_X^m\otimes P)$ is a nontrivial torsion-free sheaf.
By \cite[Lem.~2.1]{j1}, $g_*(\omega_X^m\otimes \cJ(\|\omega_X^{m-1}\|)\otimes P)$ is an $IT^0$ sheaf for any $m\geq 2$.
Hence, we conclude since $0<h^0(Z, g_*(\omega_X^m\otimes \cJ(\|\omega_X^{m-1}\|)\otimes P))\leq h^0(X, \omega_X^m\otimes P)$.
\end{proof}

\begin{proof}[Proof of Theorem \ref{thm:tetraIitaka}]
We will prove the theorem by induction on the dimension of $X$.
We suppose the statement is true in dimension $\leq n-1$ and assume $\dim X=n$.
If $X$ is of general type, then we are back to Theorem \ref{thm:trimap}.
Hence we can assume $\kappa(X)=\dim Z=n-l$, for some number $l>0$.
In particular, $\cS_X$ is not empty.

Hence, we are in Setting \ref{set:basic}.
Let $k$ be the maximal number of $\cS_X$ and let $Q+\PB$ be an irreducible component of $V^k(\omega_X, a_X)$ of codimension $k$.
Since $V^k(\omega_X, a_X)\subseteq V^0(\omega_X,a_X)$, by \cite[Lem.~2.2]{CH2}, $\PB\hookrightarrow \pr_Z^*\PA_Z$ (recall Setting \ref{set:bas2}).
Hence, Setting \ref{set:basic} and \ref{set:bas2} combine in the following commutative diagram

\begin{eqnarray*}\label{eq:diag1}
\xymatrix{
X\ar[r]^{a_X}\ar[d]^{g}\ar@/_2pc/[dd]_f & A_X\ar[d]_{\pr_Z}\ar@/^2pc/[dd]^{\pr}\\
Z\ar[r]^{a_Z}\ar[d]^{h}& A_Z\ar[d]\\
Y \ar[r]_{a_Y} & B,
}
\end{eqnarray*}
where we choose an appropriate birational model of $f:X\to Y$ in the sense of Remark \ref{rem:compatible}.

Let $y\in Y$ be a general point and denote by $X_y$ and $Z_y$ general fibres of $f$ and $h$.
By Easy Addition Formula (e.g.~\cite[Thm.~10.4]{ii}), $\dim Y+\kappa(X_y)\geq \kappa(X)=\dim Z$.
Hence $\kappa(X_y)\geq \dim Z_y$ and thus $\res{g}{X_y}: X_y\rightarrow Z_y$ is the Iitaka fibration of $X_y$.

By Lemma \ref{lem:completeseries} and \ref{lem:K}, the restriction map
\begin{equation*} H^0(X, \cO_X(4K_X+ P))\rightarrow H^0(X_y, \cO_{X_y}(4K_{X_y}+\res{P}{X_y}))
\end{equation*}
is surjective, for any $P\in \mathscr{K}$.
Notice that $\res{P}{X_y}\in V^0(\omega_{X_y}^2, a_{X_y})$, so by induction hypothesis,
\begin{equation*}|4K_{X_y}+ \res{P}{X_y}|
\end{equation*}
induces the Iitaka fibration $\res{g}{X_y}: X_y\rightarrow Z_y$.

Let $\cH$ be the line bundle on $Y$ constructed in Lemma \ref{lem:2isbirational}.
Then, by Lemma \ref{lem:1}, we just need to prove the following claim to complete the proof of the theorem.\\

\textbf{Claim.} For every $P\in \mathscr{K}$ and every $P'\in \PB$,
\begin{equation*}
4K_X+P-f^*(\cH+ P')
\end{equation*}
has a non-trivial section.\\

Let be $\cJ:=\cJ(||3K_X-f^*\cH+\frac{1}{N}f^*H||)$, where $N$ is an integer large enough and $H$ is an ample divisor on $Y$.
For any $P\in\mathscr{K}$, we define
\begin{equation*}
\cG_P:=g_*\big(\cO_X(4K_X+P-f^*\cH)\otimes \cJ\big).
\end{equation*}
To conclude the proof of the claim it is enough to see that $V^0(h_*\cG_P, a_Y)=\PB$.

By \cite[Lemma 2.1]{j1}, we have
\begin{eqnarray*}
H^i(Z, \cG_P\otimes Q''\otimes h^*H')=0,
\end{eqnarray*}
for any $i\geq 1$, any ample divisor $H'$ on $Y$, and any $Q''\in \PA_Z$.
Hence,
\begin{equation*}
R^ih_*(\cG_P \otimes Q'')=0,
\end{equation*}
for any $i>0$ (see e.g.~\cite[Lemma 4.3.10]{laz}).
Therefore,
\begin{equation} \label{eq:const}
\chi(Y, h_*(\cG_P\otimes Q''))=\chi(Z, \cG_P\otimes Q'')
\end{equation}
is constant for $Q''\in\PA_Z$.

By Lemma \ref{lem:existencebundle} and Lemma \ref{lem:2isbirational}, we know there exists $m\in\mathbf{Z}$ such that
\begin{equation*}
H^0(X, \cO_X(3K_X+mQ-f^*\cH-f^*\cL))\neq 0.
\end{equation*}

Observe that $P-mQ$ is not necessarily in $\pr_Z^*\PA_Z$.
But, since $P-mQ\in \mathscr{K}$, we have that $a_{Z*}g_*\cO_X(K_X+P-mQ)$ is a non-trivial $GV$-sheaf.
In particular, $V^0(g_*\cO_X(K_X+P-mQ),a_Z)\neq \emptyset$.
Hence there exists $Q_0\in\pr_Z^*\PA_Z$ such that $P-mQ+Q_0\in V^0(\omega_X, a_X)$.

Therefore \begin{eqnarray*}&&4K_X+P+Q_0-f^*\cH-f^*\cL\\&=&(K_X+P-mQ+Q_0)+(3K_X+mQ-f^*\cH-f^*\cL)\end{eqnarray*} is the sum of two effective divisors.
By the same argument as in \eqref{eq:wIeff},
\begin{equation*}
H^0(Z, \cG_P\otimes h^*\cL^{-1}\otimes Q_0)\neq 0.
\end{equation*}
We know that $V^0(\cL,a_Y)=\PB$ (see Lemma \ref{lem:existencebundle}) and $h_*(\cG_P\otimes Q')$ is a $GV$-sheaf for any $P\in\mathscr{K}$ and any $Q'\in \PA_Z$ (see \cite[Lem.~2.5]{j1}).
Hence, for $P'\in\PB$ general,
\begin{align*}h^0(Y, h_*\cG_{P}\otimes P')&=\chi(Y, h_*\cG_P)= \chi(Y, h_*(\cG_P\otimes Q_0)) &\textrm{by \eqref{eq:const}}\\
&=h^0(Y, h_*(\cG_P\otimes Q_0)\otimes P')>0.
\end{align*}
By semicontinuity, $V^0(h_*\cG_{P}, a_Y)=\PB$ for any $P\in\mathscr{K}$.
\end{proof}

Now, we can make more precise Remark \ref{rem:before}.
\begin{rema}\label{rem:after}
In the previous proof, observe that if $P- mQ$ lies in $\pr^*_Z\PA_Z$, then $3K_X+P+Q_0-f^*\cH-f^*\cL$ is effective for some $Q_0\in \pr^*\PA_Z$.
So, we could have improved the result to the tricanonical map (assuming the induction hypothesis).
In particular, if $\mathscr{Q}:=\mathscr{K}/\pr_Z^*\PA_Z$ is trivial for $X$ and the successive fibres of the induction process, then the tricanonical map twisted by an element in $\mathscr{K}$ induces the Iitaka fibration.

Moreover, if for some $P\in\mathscr{K}$, $P+\pr_Z^*\PA_Z$ is an irreducible component of $V^0(\omega_X, a_X)$, then we can again prove that the tricanonical map twisted by an element in $\mathscr{K}$ induces the Iitaka fibration.
This shows that varieties of maximal Albanese dimension, where none of the twisted tricanonical map is birational equivalent to the Iitaka fibration, are closely related to varieties of maximal Albanese dimension, of general type with vanishing holomorphic Euler characteristic.
\end{rema}

We finish with an example of maximal Albanese dimension, whose tricanonical map does not induce the Iitaka fibration.
This example is based on the famous Ein-Lazarsfeld threefold, which is constructed in \cite[Ex.~1.13]{el} and further investigated in \cite{CDJ}.
\begin{exam}\label{ex:not3}
We take three bielliptic curves $C_i$ of genus $2$, $i=1,2,3$.
Let $\rho_i: C_i\rightarrow E_i$ be the double cover over an elliptic curve $E_i$ and denote by $\tau_i$ the involution of fibres of $\rho_i$.
We write
\begin{equation*}\rho_{i*}\cO_{C_i}=\cO_{E_i}\oplus \cL_i^{-1},\end{equation*}
where $\cL_i$ is a line bundle on $E_i$ of degree $1$.

Let $Y$ be the threefold $(C_1\times C_2\times C_3)/(\tau_1, \tau_2, \tau_3)$, which has only rational singularities.
We know that $a_Y: Y\rightarrow E_1\times E_2\times E_3$ is a $(\mathbf{Z}/2\mathbf{Z}\times \mathbf{Z}/2\mathbf{Z})$-cover.

We then take an abelian variety $A$ and a $(\mathbf{Z}/2\mathbf{Z}\times \mathbf{Z}/2\mathbf{Z})$-\'etale cover $\wA\rightarrow A$.
Set $\{\cO_A, P_1, P_2, P_3\}$ to be the kernel $\PA\rightarrow \widehat{\wA}$.

Denote $H=(\mathbf{Z}/2\mathbf{Z}\times \mathbf{Z}/2\mathbf{Z})$ and let $X'$ be the variety $(Y\times \wA)/H$, where $H$ acts diagonally on $Y\times \wA$.
Notice that $X'$ has only rational singularities and let $X$ be a resolution of singularities of $X'$.
The Albanese morphism
\begin{equation*}a_{X}: X\rightarrow E_1\times E_2\times E_3\times A\end{equation*}
is birationally a $(\mathbf{Z}/2\mathbf{Z}\times \mathbf{Z}/2\mathbf{Z})$-cover.

After permutation of $\{P_i, i=1,2,3\}$, we have
\begin{eqnarray*}a_{X*}\omega_{X}^3&\simeq&\big(\cL_1^2\boxtimes\cL_2^2\boxtimes \cL_3^2\boxtimes \cO_A\big)\oplus \big(\cL_1^3\boxtimes\cL_2^3\boxtimes \cL_3^2\boxtimes P_1\big)\\
&\oplus &\big(\cL_1^3\boxtimes\cL_2^2\boxtimes \cL_3^3\boxtimes P_2\big) \oplus \big(\cL_1^2\boxtimes\cL_2^3\boxtimes \cL_3^3\boxtimes P_3\big)
\end{eqnarray*}

It is easy to check that for any $P\in\Pic^0(X)$, the linear series $|3K_X+P|$ can not induce the Iitaka fibration $X\rightarrow E_1\times E_2\times E_3$.

Using, the notation of Setting \ref{set:bas2}, observe that
\begin{equation*}
\mathscr{K}=V^0(\omega_X^2,a_X)= \bigcup_{Q\in \{\cO_A,P_1,P_2,P_3\}} E_1\times E_2\times E_3\times \{Q\}
\end{equation*}
and $\cQ=\Z/2\Z\times \Z/2\Z$.
Indeed, $\cQ$ can be identified with $\{\cO_A,P_1,P_2,P_3\}$.
\end{exam}

\subsection*{Acknowledgements}
The first-named author enjoyed the hospitality of the Max Planck Institut for Mathematics in Bonn during the preparation of this paper.

The second-named author was supported by the SFB/TR 45 `Periods, moduli spaces, and arithmetic of algebraic varieties' and partially by the Proyecto de Investigaci\'on MTM2009-14163-C02-01.

The third-named author thanks her advisor, Prof.~G.~Pareschi, for having introduced her to the subject of pluricanonical maps and for many suggestions.
She is also deeply indebted to C.~Ciliberto for many mathematical conversations.
Finally she is grateful toward the department of Mathematics of Universit\`a di Roma ``Tor Vergata'' for its kind hospitality.

Finally, we want to thank the referees for carefully reading the paper and for many suggestions that improved the exposition.

\end{document}